\documentclass{article}
\usepackage[utf8]{inputenc}
\usepackage[]{graphicx}
\usepackage{amsthm,amsmath,amssymb,tikz,tikz-cd,amsmath,mathrsfs,mathtools,multicol,dirtytalk,tabularx,xy,lipsum,url,enumitem,cmll}
\usepackage{float}
\usepackage{enumitem} 

\usepackage[total={6.25in, 8in}]{geometry}
\usepackage{fancyhdr}
\pagestyle{fancy}

\usepackage{changepage}

\numberwithin{equation}{section}
\theoremstyle{plain}
\newtheorem{theorem}{Theorem}[section]
\newtheorem{lemma}[theorem]{Lemma}
\newtheorem{proposition}[theorem]{Proposition}
\newtheorem{corollary}[theorem]{Corollary}

\theoremstyle{definition}
\newtheorem{definition}[theorem]{Definition}

\newtheorem{example}[theorem]{Example}
\newtheorem{notation}[theorem]{Notation}

\title{Topological duality for orthomodular lattices\footnote{This article is forthcoming in \emph{Mathematical Logic Quarterly} (2023).}\footnote{Joseph McDonald was supported as a research assistant by the SSHRC-IG \#435--2019--0331 of Katalin Bimb\'o. We thank the anonymous referee for their very helpful comments. We also thank the attendees of the BLAST 2022 conference (Chapman University, Department of Mathematics) at which a talk based on an earlier version of this paper was presented. }}
\author{Joseph McDonald\footnote{Institute: University of Alberta, Department of Philosophy, Edmonton, Canada, Email: jsmcdon1@ualberta.ca}\hspace{.1cm} and Katalin Bimb\'o\footnote{Institute: University of Alberta, Department of Philosophy, Edmonton, Canada, Email: bimbo@ualberta.ca}}
\date{}

\begin{document}
\maketitle

\begin{abstract}
A class of ordered relational topological spaces is described, which we call \emph{orthomodular spaces}. Our construction of these spaces involves adding a topology to the class of orthomodular frames introduced by Hartonas, along the lines of Bimb\'o's topologization of the class of orthoframes employed by Goldblatt in his representation of ortholattices. We then prove that the category of orthomodular lattices and homomorphisms is dually equivalent to the category of orthomodular spaces and certain continuous frame morphisms, which we call \emph{continuous weak p-morphisms}. It is well-known that orthomodular lattices provide an algebraic semantics for the quantum logic $\mathcal{Q}$. Hence, as an application of our duality, we develop a topological semantics for $\mathcal{Q}$ using orthomodular spaces and prove soundness and completeness.    
\par
\vspace{.15cm}
\noindent\textbf{Mathematics subject classification (2010):} 06E15, 06C15, 03G12
\par
\vspace{.15cm}
\noindent\textbf{Keywords:} Duality theory, Orthomodular lattice, Orthomodular space, Quantum logic

\end{abstract}

\maketitle                 

\fancyhead{dfg}
\fancyhead[L]{Topological duality for orthomodular lattices}
\fancyhead[C]{}
\fancyhead[R]{McDonald, J., Bimb\'o, K.}
\section{Introduction}
Goldblatt in \cite{goldblatt1} proved that every ortholattice $\mathfrak{A}$ is isomorphic to the algebra $\mathcal{CO}(\mathcal{X})^{\dagger}$ of clopen $\perp$-stable subsets of a Stone space $\mathcal{X}$ endowed with an orthogonality relation $\perp$ that is irreflexive and symmetric. Bimb\'o in \cite{bimbo} introduced the class of orthospaces by adding a topology to the class of orthoframes used by Goldblatt and extended his topological representation of ortholattices to a full duality between the category $\mathbb{OL}$ of ortholattices and homomorphisms and the category $\mathbb{OS}$ of orthospaces and continuous orthoframe morphisms.\footnote{We use terms such as \say{category} here and later; however, the second author would have preferred to avoid talk of categories altogether.} We refer to McDonald and Yamamoto in \cite{mcdonald} for a constructive analogue of this duality using spectral spaces. Orthomodular lattices form a well-known subclass of ortholattices. Prototypical examples of orthomodular lattices include the lattice $\mathbb{C}(\mathcal{H})$ of closed linear subspaces of an infinite-dimensional separable Hilbert space $\mathcal{H}$ over the complex numbers. Hence, orthomodular lattices provide an algebraic foundation for the quantum logic $\mathcal{Q}$ (refer to \cite{goldblatt2,hartonas} for more details on $\mathcal{Q}$). Goldblatt in \cite{goldblatt3} proved that the condition of orthomodularity in $\mathbb{C}(\mathcal{H})$ is not elementary in the sense that it cannot be characterized by any first-order property of the relation of orthogonality between vectors in $\mathcal{H}$, where $\mathcal{H}$ is a pre-Hilbert space. This result raised the natural question as to whether there exists an elementary subclass of orthoframes that can capture the condition of orthomodularity.\footnote{Recall that a class $K$ of structures is an \emph{elementary class} provided there exists a first-order theory $T$ of signature $\sigma$ such that $K$ consists of all $\sigma$-structures satisfying $T$.} This question was answered in the affirmative by Hartonas in \cite{hartonas} who introduced an elementary class of partially ordered orthoframes, which he calls \emph{orthomodular frames}, and demonstrated $\mathcal{Q}$ to be sound and complete with respect to this class of structures. 
 
 In this paper, we describe a class of ordered relational topological spaces, which we call \emph{orthomodular spaces}. Our construction of these spaces involves adding a topology to the class of orthomodular frames along the lines of the topologization in \cite{bimbo} of the class of orthoframes. We first prove that the spectrum $\mathscr{S}_0(\mathfrak{A})$ of an orthomodular lattice $\mathfrak{A}$ determined by the filter space $\mathfrak{F}(\mathfrak{A})$ of $\mathfrak{A}$, along with a distinguished subset $\mathfrak{P}(\mathfrak{A})\subseteq\mathfrak{F}(\mathfrak{A})$ of principal filters of $\mathfrak{A}$, gives rise to an orthomodular space. We then prove that the algebra $\mathscr{A}_0(\mathcal{X})$ determined by the collection $\mathcal{CO}(\mathcal{X})^{\dagger}$ of clopen $\perp$-stable subsets of an orthomodular space $\mathcal{X}$ gives rise to an orthomodular lattice. This leads to a topological representation theorem which shows that every orthomodular lattice $\mathfrak{A}$ is isomorphic to $\mathscr{A}_0(\mathscr{S}_0(\mathfrak{A}))$. We then prove an algebraic realization theorem which shows that every orthomodular space $\mathcal{X}$ is relationally homeomorphic to $\mathscr{S}_0(\mathscr{A}_0(\mathcal{X}))$. Then, by working with a special class of continuous frame morphisms to accompany the orthomodular spaces, which we call \emph{continuous weak} $p$-\emph{morphisms}, we prove that every orthomodular lattice homomorphism $\phi\colon\mathfrak{A}\to\mathfrak{A}'$ determines a continuous weak $p$-morphism $\mathscr{S}_1(\phi)$ from the dual space $\mathscr{S}_0(\mathfrak{A}')$ of $\mathfrak{A}'$ to the dual space $\mathscr{S}_0(\mathfrak{A})$ of $\mathfrak{A}$. Conversely, we also prove that every continuous weak $p$-morphism $\psi\colon\mathcal{X}\to\mathcal{X}'$ between orthomodular spaces determines an orthomodular lattice homomorphism $\mathscr{A}_1(\psi)$ from the dual lattice $\mathscr{A}_0(\mathcal{X}')$ of $\mathcal{X}'$ to the dual lattice $\mathscr{A}_0(\mathcal{X})$ of $\mathcal{X}$. The main theorem of this paper then shows that the induced contravariant functors $\mathscr{S}_F:=\langle\mathscr{S}_0,\mathscr{S}_1\rangle\colon\mathbb{OML}\to\mathbb{OMS}$ and $\mathscr{A}_F:=\langle\mathscr{A}_0,\mathscr{A}_1\rangle\colon\mathbb{OMS}\to\mathbb{OML}$ extend naturally to a dual equivalence of categories. Finally, by exploiting the known algebraic semantics that orthomodular lattices provide for $\mathcal{Q}$, as an application of our duality, we develop a topological semantics for $\mathcal{Q}$ using orthomodular spaces and prove soundness and completeness.

 Although this is the first full Stone-type duality for orthomodular lattices, it is not the first investigation of their topological representation theory.\footnote{In general, unlike a topological representation for a given class of algebras combined with an algebraic realization of their dual spaces which establishes a duality at the level of objects (i.e., algebras and spaces), a full duality extends to morphisms (i.e., homomorphisms and continuous functions) by way of contravariant functors which constitute a dual equivalence of categories. We refer to Clark and Davey in \cite{clark} for standard examples of full dualities which fall under the classification of a Stone-type duality.

 It is worth pointing out the duality for orthomodular lattices developed by Cannon and Doring in \cite{cannon}. Their duality is however a generalization of a Stone-type duality as they work with techniques from topos theory whereby one considers certain spectral presheaves as the duals arising from orthomodular lattices.} Iturrioz in \cite{iturrioz} proved that every orthomodular lattice $\mathfrak{A}$ is isomorphic to the orthomodular lattice of clopen $B\hspace{-.15cm}\perp$-stable subsets of a Stone space whose orthoframe reduct satisfies the additional requirement that $z\perp x$ iff $z\perp y$ implies $x=y$.\footnote{See \cite[p. 510]{iturrioz} for a description of the $\textit{B}\hspace{-.1cm}\perp$-subsets of an orthoframe.} Moreover, Iturrioz described the class of topological spaces that characterize (up to relational homeomorphism) the space of proper filters used in her representation. Iturrioz in \cite{iturrioz1} later gave a topological representation of orthomodular lattices by means of closure spaces which precisely returns Stone's representation of Boolean algebras when the orthomodular lattice in question is distributive (i.e., a Boolean algebra). Unlike the representation theory in \cite{iturrioz,iturrioz1}, we work with spaces whose frame reducts are the elementary class of structures defined in \cite{hartonas}. Moreover, we work with the space of all filters of an orthomodular lattice $\mathfrak{A}$. That is, we include the improper filter $\omega$ containing the $0$-element of $\mathfrak{A}$ and as a result, the canonical representation map sends $0$ in $\mathfrak{A}$ to $\{\omega\}$ in $\mathscr{A}_0(\mathscr{S}_0(\mathfrak{A}))$ instead of $\emptyset$. This paper makes the following three contributions: First, it offers an alternative topological representation of orthomodular lattices and algebraic realization of their duals, compared to \cite{iturrioz,iturrioz1}. Second, it extends the existing Stone-type topological representation theory of orthomodular lattices to a full duality. Finally, it provides the first topological semantics known for $\mathcal{Q}$.

\section{Orthomodular lattices and orthomodular frames}
In this section, we exposit some basic aspects of ortholattices and orthomodular lattices. We then describe the elementary class of orthomodular frames introduced by Hartonas. For a more detailed treatment of these classes of structures, consult \cite{kalmbach} and \cite{hartonas}, respectively.  
\subsection{Ortholattices and orthomodular lattices}
\begin{definition}
An \emph{ortholattice} is an algebra $\mathfrak{A}=\langle A;\cdot,+,-,0,1\rangle$ of similarity type $\langle 2,2,1,0,0\rangle$ satisfying the following equations: 
\begin{multicols}{3}
\begin{enumerate}
    \item $a\cdot (b\cdot c)=(a\cdot b)\cdot c$
    \item $a+( b+c)=(a+b)+c$
    \item $a\cdot b=b\cdot a$
    \item $a+ b=b+ a$
    \item $a\cdot(b+ a)=a$
    \item $a+(b\cdot a)=a$
    \item $-(a\cdot b)=-a+ -b$
    \item $-(a+ b)=-a\cdot -b$
    \item $-(-a\cdot -b)=a+ b$
    \item $-(-a+ -b)=a\cdot b$
     \item $1\cdot a=a$
     \item $0+ a=a$
    \item $a\cdot-a=0$
    \item $a+-a=1$
    \end{enumerate}
    \end{multicols}
\end{definition}

\begin{definition}
A \emph{Boolean algebra} is an ortholattice $\mathfrak{A}=\langle A;\cdot,+,-,0,1\rangle$ satisfying conditions 1--14 in Definition 2.1 along with the following distribution laws: 
\
    \begin{enumerate}
 \item $a\cdot(b+ c)=(a\cdot b)+(a\cdot c)$
    \item $a+ (b\cdot c)=(a+ b)\cdot(a+ c)$
\end{enumerate}

\end{definition}
Although some of the equations in the above definitions of Boolean algebras and ortholattices are redundant, as some equations are derivable from others, they emphasize the fact that ortholattices are not necessarily distributive. Indeed, we have the following. 
\begin{proposition}
An ortholattice $\mathfrak{A}$ is a distributive lattice if and only if $\mathfrak{A}$ is a Boolean algebra. 
\end{proposition}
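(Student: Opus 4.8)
The plan is to observe that, once one records the (standard, but worth stating) fact that the $\langle\cdot,+\rangle$-reduct of an ortholattice is a bounded lattice, the proposition reduces to unwinding Definitions 2.1 and 2.2. So I would first verify that for any ortholattice $\mathfrak{A}=\langle A;\cdot,+,-,0,1\rangle$ the reduct $\langle A;\cdot,+\rangle$ is a bounded lattice: commutativity and associativity of both operations are equations 1--4, the absorption laws are equations 5--6, idempotency follows by combining absorption with the unit laws (e.g.\ $a+a=a+(1\cdot a)=a$ by equations 11 and 6 with $b:=1$, and dually $a\cdot a=a$ by equations 12 and 5 with $b:=0$), while equations 11--12 together with commutativity make $1$ the greatest and $0$ the least element, with $0\cdot a=0$ and $1+a=1$ also falling out of absorption. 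In particular $\cdot$ is the meet and $+$ the join of the induced order, so the phrase ``$\mathfrak{A}$ is a distributive lattice'' means precisely that this reduct satisfies the distributive laws, i.e.\ equations 15--16 of Definition 2.2.

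With that in hand, the ($\Leftarrow$) direction is immediate: if $\mathfrak{A}$ is a Boolean algebra then by Definition 2.2 it satisfies equations 15--16, which is exactly distributivity of its lattice reduct.

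For the ($\Rightarrow$) direction, I would assume the lattice reduct of $\mathfrak{A}$ is distributive, so equations 15--16 hold; since $\mathfrak{A}$ is an ortholattice it also satisfies equations 1--14, hence it satisfies all of 1--16 and is therefore a Boolean algebra by Definition 2.2. If a more conceptual phrasing is wanted, one can instead note that equations 13--14 exhibit $-a$ as a complement of $a$, that complements in a bounded distributive lattice are unique (via the usual computation $x=x\cdot 1=x\cdot(a+y)=(x\cdot a)+(x\cdot y)=0+(x\cdot y)=x\cdot y$, whence $x\le y$, and symmetrically $y\le x$), and that a complemented bounded distributive lattice is by definition a Boolean algebra; the orthocomplementation is then forced to coincide with the Boolean complement.

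I do not expect any genuine obstacle. The only step needing a little care is the preliminary verification that the ortholattice reduct is a bounded lattice, and even that is routine equational manipulation; after it, both implications follow directly from the definitions. The real interest of the proposition is conceptual rather than technical: it isolates (non)distributivity as the exact feature separating ortholattices from Boolean algebras, which is the point the preceding remark is setting up.
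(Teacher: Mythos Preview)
Your proposal is correct and follows essentially the same approach as the paper: both directions are obtained by directly comparing Definitions~2.1 and~2.2, observing that a distributive ortholattice satisfies all the Boolean-algebra equations and conversely. You are more thorough than the paper in spelling out that the $\langle\cdot,+\rangle$-reduct is a bounded lattice and in offering the alternative complement-uniqueness argument, but the core idea is identical (note, incidentally, that the distributive laws are labelled 1--2 in Definition~2.2, not 15--16).
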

\begin{proof}
For the left-to-right direction, we note that if $\mathfrak{A}$ is an ortholattice in the sense of Definition 2.1 satisfying the distribution laws (i.e., equations 1 and 2 of Definition 2.2), then $\mathfrak{A}$ is exactly a Boolean algebra in the sense of Definition 2.2. The right-to-left direction is also trivial.  
\end{proof}
\begin{proposition}
For any ortholattice $\mathfrak{A}$, the bottom element $0$ of $\mathfrak{A}$ is definable as the orthocomplement of the top element $1$ of $\mathfrak{A}$, and vice versa. 
\end{proposition}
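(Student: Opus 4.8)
The plan is to show that in any ortholattice $\mathfrak{A}=\langle A;\cdot,+,-,0,1\rangle$, we have $0=-1$ and $1=-0$, so that each of the two constants is term-definable from the other together with the orthocomplementation. First I would establish $-1=0$. Applying equation 13 of Definition 2.1 with $a=1$ gives $1\cdot -1=0$, and then equation 11 ($1\cdot a=a$, instantiated at $a=-1$) rewrites the left-hand side as $-1$, yielding $-1=0$ directly. Symmetrically, to obtain $-0=1$, I would apply equation 14 with $a=0$ to get $0+-0=1$, and then use equation 12 ($0+a=a$ at $a=-0$) to rewrite the left-hand side as $-0$, giving $-0=1$.

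Having these two identities in hand, the conclusion follows: the term $-1$ denotes $0$ in every ortholattice, so $0$ is definable as the orthocomplement of $1$; and the term $-0$ denotes $1$, so $1$ is definable as the orthocomplement of $0$. One could alternatively derive $-0=1$ from $-1=0$ together with the involution law — applying equation 9 or 10 (or the derived fact that $--a=a$, which itself follows from equations 9--10) to $-1=0$ gives $-(-1)=-0$, i.e. $1=-0$ — but the direct route above via equations 13, 14, 11, 12 is cleaner and avoids first proving the double-negation law.

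There is essentially no obstacle here; the only mild subtlety worth a sentence in the write-up is making explicit which of the (deliberately redundant) axioms of Definition 2.1 are being invoked, since the proposition is really a remark about definitional redundancy in the chosen signature. I would keep the proof to two or three lines, citing equations 11 and 13 for the first identity and equations 12 and 14 for the second, and then stating the definability conclusion.
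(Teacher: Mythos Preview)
Your argument is correct. It is also shorter and more elementary than the paper's own proof. The paper establishes $0=-1$ via the chain
\[
0=a\cdot-a=-(-a+--a)=-(-a+a)=-(a+-a)=-1,
\]
which invokes the De~Morgan law (equation~10), commutativity of $+$, and the involution law $--a=a$ (the latter being noted as ``easily provable from Definition~2.1'' but not actually derived). The dual chain for $1=-0$ is analogous. Your route bypasses all of this: instantiating equation~13 at $a=1$ and applying equation~11 gives $-1=0$ in two steps, and symmetrically equations~14 and~12 give $-0=1$. What your approach buys is economy and independence from the De~Morgan and double-negation laws; what the paper's approach buys is perhaps a demonstration that the identities hold uniformly in $a$ (the chain never specializes $a$), though that is not needed for the stated proposition. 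Your version is the cleaner write-up.
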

\begin{proof}
The following calculations prove the claim:
\[0=a\cdot-a=-(-a+--a)=-(-a+ a)=-(a+-a)=-1\]
\[1=a+-a=-(-a\cdot--a)=-(-a\cdot a)=-(a\cdot -a)=-0\]

Note that the equalities $(-a+--a)=-(-a+ a)$ and $-(-a\cdot--a)=-(-a\cdot a)$ in the above calculations rely on the fact that $-$ is an involution, which is easily provable from Definition 2.1. The remaining equalities are justified directly by Definition 2.1.

    Hence, by equation 9 in Definition 2.1, ortholattices are presentable in the reduced signature $\langle A;\cdot,-,0\rangle$ in which joins can be defined as the De Morgan duals of meets. 
\end{proof}
 The above proposition allows us to simplify our description of ortholattices in the following manner. 
\begin{definition}
An \emph{ortholattice} is an algebra $\mathfrak{A}=\langle A;\cdot,-,0\rangle$ of similarity type $\langle 2,1,0\rangle$ such that:
\begin{enumerate}
    \item $\langle A;\cdot,0\rangle$ is a meet-semilattice with a least element $0$ 
    \item the operator $-\colon A\to A$ is an \emph{orthocomplementation}, i.e., 
    \begin{enumerate}
    \item $a\cdot -a=0$ 
  \item $a\leq b\Rightarrow -b\leq -a$
        \item $a=--a$
    \end{enumerate}
\end{enumerate}
\end{definition}
\begin{definition}
An \emph{orthomodular lattice} is an ortholattice $\mathfrak{A}=\langle A;\cdot,-,0\rangle$ satisfying the following quasi-inequation, known as the \emph{orthomodularity law}: 
 \[a\leq b\Rightarrow b=a+(-a\cdot b)\]
\end{definition}
\begin{example}
Let $\mathcal{H}$ be an infinite-dimensional separable complex Hilbert space and let $\mathbb{C}(\mathcal{H})$ be the collection of closed linear subspaces of $\mathcal{H}$. Then the algebra $\langle\mathbb{C}(\mathcal{H});\cap,\hspace{.025cm}^{\perp},\emptyset\rangle$ is an orthomodular lattice ordered by subspace inclusion. Notice that for each closed linear subspace $U\subseteq\mathcal{H}$, its orthogonal complement is defined by $U^{\perp}:=\{x\in\mathcal{H}:\langle x,y\rangle=0\hspace{.1cm}\text{for all}\hspace{.1cm}y\in U\}$, where $\langle-,-\rangle:\mathcal{H}\times\mathcal{H}\to\mathbb{C}$ is the inner product on $\mathcal{H}$. For a detailed treatment of Hilbert spaces, we refer to Cohen in \cite{cohen}.  
\end{example}

  Ortholattices, as well as the orthomodular lattices, are equationally definable; hence, by Birkhoff's Varieties Theorem, the classes of these algebras are varieties, that is, they are closed under the formation of homomorphic images, subalgebras, and direct products.

The following characterization of orthomodular lattices is well-known and will be exploited later on. 

\begin{proposition}\label{oml}
For any ortholattice $\mathfrak{A}$, the following conditions are equivalent: 
\begin{enumerate}
    \item $\mathfrak{A}$ is an orthomodular lattice;
    \item if $a\leq b$ and $-a\cdot b=0$, then $a=b$.
\end{enumerate}
\end{proposition}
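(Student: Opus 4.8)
The plan is to prove the equivalence of the two conditions by showing each implication directly, using only the definition of orthomodular lattice (Definition 2.7) and basic ortholattice identities from Definition 2.6.

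For the direction $(1)\Rightarrow(2)$, I would assume $\mathfrak{A}$ is orthomodular and suppose $a\leq b$ with $-a\cdot b=0$. Applying the orthomodularity law to the hypothesis $a\leq b$ gives $b=a+(-a\cdot b)$. Substituting $-a\cdot b=0$ then yields $b=a+0=a$, since $0$ is the least element and $a+0=a$ holds in any ortholattice (this is the join-analogue of $0$ being the identity for $+$, which follows from $0$ being the least element: $a+0 = a$ because $a\leq a+0$ and $a+0\leq a+a=a$). So $a=b$, as required.

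For the direction $(2)\Rightarrow(1)$, I would assume condition (2) holds and verify the orthomodularity law. Suppose $a\leq b$; I need $b=a+(-a\cdot b)$. Set $c:=a+(-a\cdot b)$. The strategy is to apply (2) to the pair $c\leq b$, so I must check two things: first that $c\leq b$, and second that $-c\cdot b=0$; then (2) forces $c=b$, which is exactly what we want. That $c\leq b$ holds because $a\leq b$ and $-a\cdot b\leq b$, so their join is $\leq b$. For the second requirement, I would compute $-c = -(a+(-a\cdot b)) = -a\cdot -(-a\cdot b) = -a\cdot(a+-b)$ using De Morgan and involution; hence $-c\cdot b = -a\cdot(a+-b)\cdot b$. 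One then needs $-a\cdot(a+-b)\cdot b = 0$. Since $a\leq b$ we have $-b\leq -a$, so $a+-b\leq a+-a = 1$... that is too weak; instead I would argue more carefully that $-a\cdot b \le -(a+-b)$, equivalently $(a+-b)\cdot(-a\cdot b) = 0$: indeed $(a+-b)\cdot -a \le$ (using distributivity-free manipulation is delicate here) — the clean route is to note $a + -b \leq a + -(a \cdot b)$...

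The cleanest and safest argument avoids ad hoc meet-join juggling: since $-a \cdot b \leq -a$ and $-a \cdot b \leq b$, and we want $(-c)\cdot b = 0$ where $c = a + (-a\cdot b)$, note that $b\cdot -c = b \cdot -a \cdot (a + -b)$. Now $b \cdot -a \le b$ and $b\cdot -a \le -a$, so $b\cdot -a \le -a \cdot b \le c$, hence $b\cdot -a \le b \cdot c$; but also $-c \le -(b\cdot -a)$ follows from $b\cdot -a \le c$, giving $b \cdot -a \cdot -c \le b\cdot -a \cdot -(b\cdot -a) = 0$. Since $b\cdot -c = b\cdot -c \cdot (-a + a)$ and the $-a$-part vanishes while $b\cdot -c\cdot a \le -c \cdot a \le -c\cdot c = 0$ (as $a\le c$), we get $b\cdot -c = 0$ after handling the case split — but this last step silently used distributivity. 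The main obstacle, and the place requiring genuine care, is precisely this: carrying out the computation $-c\cdot b = 0$ without invoking distributivity, which fails in ortholattices. I expect the author handles it by the standard orthomodular-lattice fact that $a\le b$ implies $a$ and $b$ commute, or by a direct lattice-theoretic inequality chain; I would look up or reconstruct the identity $(a + (-a\cdot b))' \cdot b = -a \cdot b \cdot (a + -b)\le \cdots = 0$ and present it as a short lemma, since this is the genuine content of the proposition while the other direction is essentially immediate.
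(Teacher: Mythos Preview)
The paper does not actually prove this proposition; it simply cites Beran \cite[Theorem 3.1, p.~40]{beran}. So your attempt to supply a direct proof is already going beyond what the paper does.

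Your argument for $(1)\Rightarrow(2)$ is correct and is the standard one-line proof.

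For $(2)\Rightarrow(1)$ you have the right strategy and essentially all the pieces, but you fail to assemble them at the end and talk yourself into a non-existent distributivity problem. Concretely: after setting $c:=a+(-a\cdot b)$ and noting $c\leq b$, you correctly observe that $a\leq c$ and $-a\cdot b\leq c$. These two facts alone finish the computation without any distributivity. From $a\leq c$ you get $-c\leq -a$, hence $-c\cdot b\leq -a\cdot b$. But $-a\cdot b\leq c$, so $-c\cdot b\leq c$. Since also $-c\cdot b\leq -c$, you conclude $-c\cdot b\leq c\cdot(-c)=0$. That is exactly what you needed, and condition (2) then gives $c=b$.

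Your detour through $b\cdot -c = b\cdot -c\cdot(-a+a)$ and the subsequent ``case split'' is where distributivity sneaks in, and you were right to flag it; but it was unnecessary. In fact your earlier line ``$b\cdot -a\cdot -c\leq b\cdot -a\cdot -(b\cdot -a)=0$'' is already the whole argument once you notice that $-c\leq -a$ forces $-c=-c\cdot -a$, so $b\cdot -c=b\cdot -a\cdot -c$. You had the correct inequality and simply did not see that it was already equal to the quantity you wanted.
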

\begin{proof}
See Beran \cite[Theorem 3.1, p. 40]{beran}. 
\end{proof}
\begin{example}
The Hasse diagrams in Figure \ref{hasse diagrams} depict 3 different kinds of ortholattices. The lattice $\text{O}_{2\times 2}$ on the left is a distributive ortholattice and hence by Proposition 2.3, it is a Boolean algebra. The lattice $\text{O}_6$ in the middle is a nondistributive ortholattice but is not an orthomodular lattice. The lattice $\text{OML}_6$ on the right is an orthomodular lattice.   
\end{example}

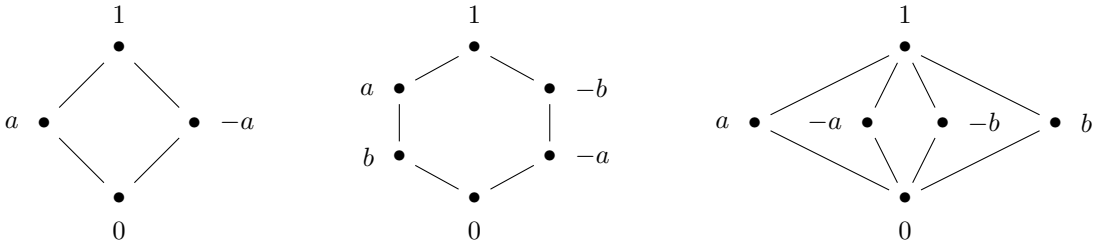
\begin{figure}[htbp]
\begin{tikzpicture}
  \node (a) at (0,1) [label=above:$1$] {$\bullet$};
  \node (b) at (1,0) [label=right:$-a$] {$\bullet$};
  \node (c) at (-1,0) [label=left:$a$] {$\bullet$};
  \node (d) at (0,-1) [label=below:$0$] {$\bullet$};
  \draw (a) -- (b) (b) -- (d) (d) -- (c) (c) -- (a) ;
  \draw[preaction={draw=white, -,line width=6pt}];
\end{tikzpicture}
\hskip 3em
  \begin{tikzpicture}
  \node (a) at (0,1) [label=above:$1$] {$\bullet$};
  \node (d) at (0,-1) [label=below:$0$] {$\bullet$};
  \node (e) at (-1,-.45) [label=left:$b$] {$\bullet$};
  \node (f) at (-1,.45) [label=left:$a$] {$\bullet$};
    \node (g) at (1,-.45) [label=right:$-a$] {$\bullet$};
  \node (h) at (1,.45) [label=right:$-b$] {$\bullet$};
  \draw (e) -- (f) (g) -- (h) (e) -- (d) (g) -- (d) (f) -- (a) (h) -- (a) (a);
  \draw[preaction={draw=white, -,line width=6pt}];
  \end{tikzpicture}
\hskip 3em 
\begin{tikzpicture}
  \node (a) at (0,1) [label=above:$1$] {$\bullet$};
  \node (b) at (2,0) [label=right:$b$] {$\bullet$};
  \node (c) at (-2,0) [label=left:$a$] {$\bullet$};
  \node (d) at (0,-1) [label=below:$0$] {$\bullet$};
  \node (i) at (.5,0) [label=right:$-b$] {$\bullet$};
  \node (j) at (-.5,0) [label=left:$-a$] {$\bullet$};
  \draw  (i) -- (a) (j) -- (a) (i) -- (d) (j) -- (d) (d) -- (b) (d) -- (c) (b) -- (a) (c) -- (a);
  \draw[preaction={draw=white, -,line width=6pt}];
\end{tikzpicture}
\caption{The lattices $\text{O}_{2\times 2}$, $\text{O}_6$, and $\text{OML}_6$}\label{hasse diagrams}
\end{figure}

\subsection{Orthomodular frames}
Orthomodular frames form a class of ordered relational structures of the following shape $\langle X;\preceq,\perp,\Omega\rangle$. The intended interpretation of the partial order reduct $\langle X;\preceq\rangle$ is that of an information ordering $\preceq$ over a set $X$ of information states. Information at a state $x\in X$ is to be viewed as information acquired as the result of an experiment or measurement performed on a quantum mechanical system. Two information states are to be viewed as orthogonal (which we denote by $x\perp y$) if their informational contents contradict each other.

\begin{definition}[Hartonas \cite{hartonas}]\label{omf}
An \emph{orthomodular frame} is an ordered relational structure $\langle X;\preceq,\perp,\Omega\rangle$ with relations of type $\preceq,{\perp}\subseteq X\times X$ and a distinguished subset $\Omega\subseteq X$ satisfying the following conditions: 
\begin{enumerate}
    \item $\langle X;\preceq\rangle$ is a meet-semilattice
    \item $\langle X;\preceq\rangle$ has an upper bound $\omega$ such that $\omega\in\Omega$ 
    \item for each point $x\in\Omega$, the set $\{y\in X: x\perp y\}$ is generated as the principal upset of a single point $z\in\Omega$. Moreover, the set $\{v\in X:z\perp v\}$ generated by $z$ is in the principal upset generated by $x$
     \item $\forall x\forall y((x\in\Omega\wedge y\in\Omega)\Rightarrow((y\preceq x\wedge\forall z(y\preceq z\wedge z\perp x)\Rightarrow z=\omega)\Rightarrow x=y))$
     \item $\forall x(x\perp x\Rightarrow x=\omega)$
     \item $\forall x\forall y((x\in\Omega\wedge y\in\Omega)\Rightarrow x\cap y\in\Omega)$ where $x\cap y:=\inf\{x,y\}$
     \item $\forall x(x\perp\omega)$
     \item $\forall x\forall y(x\perp y\Rightarrow y\perp x)$
     \item $\forall x\forall y\forall z((x\perp y\wedge x\preceq z)\Rightarrow z\perp y)$.
\end{enumerate}
\end{definition}
Notice that we notationally distinguish between the partial ordering $\preceq$ of an orthomodular frame and the partial ordering $\leq$ of an orthomodular lattice. 

Conditions 1 and 2 in the definition above guarantees that the information ordering $\langle X;\preceq\rangle$ forms a bounded meet-semilattice. In condition 3, \emph{the principal upset} of a state $x$ is the set $\{y\in X:x\preceq y\}$. For the remainder of this work, we denote the principal upset of a point $x$ by ${\uparrow}x$. Hence condition 3 can be alternatively formulated by the following first-order formula: 
\[\forall x(x\in\Omega\Rightarrow\exists z(z\in\Omega\wedge\forall y(x\perp y\Leftrightarrow z\preceq y)\wedge\forall u(z\perp u\Rightarrow x\preceq u)))\]

In \cite{hartonas}, condition 4 in the above definition is called the \emph{orthomodularity axiom} and gives a relational characterization of the orthomodularity condition stated within condition 2 of Proposition 2.8. Condition 5 asserts that $\omega$ is the only information state orthogonal to itself with condition 7 guaranteeing that $\omega$ is orthogonal to every state so that $\omega$ is the unique inconsistent state. Condition 6 is simply the requirement that the distinguished subset $\Omega$ be closed under finite infima, with condition 8 imposing the symmetry of $\perp$. Lastly, condition 9 is the requirement that any two orthogonal states are also orthogonal to each state contained within the other's principal upset. Notice that orthomodular frames form an elementary class of relational structures, defined in a first-order language over the signature $\{\preceq,\perp,\Omega\}$.
 
 \begin{definition}
 Let $\mathcal{X}=\langle X;\preceq,\perp,\Omega\rangle$ be an orthomodular frame and let $U\subseteq X$. Then, 
 \begin{enumerate}
     \item given $\wp(X):=\{U:U\subseteq X\}$, we define $^{\perp}\colon\wp(X)\to\wp(X)$ as follows: \[U^{\perp}:=\{x\in X:\forall y(y\in U\Rightarrow x\perp y)\}=\{x\in X:x\perp U\}\]
     \item $U$ is $\perp$-\emph{stable} if $U=U^{\perp\perp}$ where $U^{\perp\perp}:=(U^{\perp})^{\perp}$.
 \end{enumerate}
 We refer to $U^{\perp}$ as the \emph{orthogonal complement} of $U$. 
 \end{definition}
\section{Topological representation of orthomodular lattices}
\begin{notation}
 Let $\mathcal{X}=\langle X;\preceq,\perp,\Omega,\mathcal{T}_X\rangle$ be an ordered relational topological space such that $\langle X;\preceq,\perp,\Omega\rangle$ is an orthomodular frame and $\mathcal{T}_X$ is a topology on $X$. Then, we define the following subsets of $\mathcal{X}$ as follows:    
 \begin{enumerate}
    \item $\mathcal{C}(\mathcal{X}):=\{U\in\wp(X):\text{$U$ is closed in}\hspace{.1cm} \mathcal{X}\}$
     \item $\mathcal{O}(\mathcal{X}):=\{U\in\wp(X):\text{$U$ is open in}\hspace{.1cm}\mathcal{X}\}$
     \item $(\mathcal{X})^{\dagger}:=\{U\in\wp(X):\text{$U$ is}\hspace{.025cm}\perp\hspace{-.1cm}\text{-stable}\hspace{.1cm}\text{in}\hspace{.1cm}\mathcal{X}\}$
     \item $\mathcal{CO}(\mathcal{X}):=\mathcal{C}(\mathcal{X})\cap\mathcal{O}(\mathcal{X})$
     \item $\mathcal{CO}(\mathcal{X})^{\dagger}:=\mathcal{CO}(\mathcal{X})\cap(\mathcal{X})^{\dagger}$
 \end{enumerate}

\end{notation}
Building on the definition of orthospaces in \cite{bimbo}, we now describe the class of ordered relational topological spaces that will be shown to arise as the dual spaces of the orthomodular lattices. 
 \begin{definition}\label{oms}
 An \emph{orthomodular space} is an ordered relational topological space $\mathcal{X}=\langle X;\preceq,\perp,\Omega,\mathcal{T}_X\rangle$ satisfying the following conditions: 
 \begin{enumerate}
 \item $\langle X;\preceq,\perp,\Omega\rangle$ is an orthomodular frame;
 \item $\langle X;\mathcal{T}_X\rangle$ is a compact topological space; 
 \item $\mathcal{CO}(\mathcal{X})^{\dagger}$ is closed under $\cap$ and $^{\perp}$;
 \item each $U\in\mathcal{CO}(\mathcal{X})^{\dagger}$ is of the form ${\uparrow}z$ for some $z\in\Omega$;
 \item if $x\not\preceq y$, then there exists $U\in\mathcal{CO}(\mathcal{X})^{\dagger}$ such that $x\in U$ and $y\not\in U$;
 \item if $x\perp y$, then there exists $U\in\mathcal{CO}(\mathcal{X})^{\dagger}$ such that $x\in U$ and $y\in U^{\perp}$.
 \end{enumerate}
 \end{definition}

There are some key similarities between orthomodular spaces (as described above) and orthospaces (as described in \cite[Definition 3.2]{bimbo}) that are worth noting. First, both classes of spaces can be viewed as ordered relational Stone spaces whose topologies are generated by the set of clopen $\perp$-stable sets and their set-theoretic complements. Second, both classes of spaces satisfy a Priestley-style separation axiom with respect to clopen $\perp$-stable sets, i.e., condition 6 above and condition 1 of Definition 3.2 in \cite{bimbo} (see \cite{priestley} for more details on Priestley spaces). Lastly, by condition 6 of orthomodular spaces and condition 4 of orthospaces in \cite{bimbo}, both class of spaces allow one to topologically distinguish orthogonal pairs of points in the sense if $x$ is orthogonal to $y$, one can find some clopen $\perp$-stable set containing one such that its orthogonal complement contains the other. However, unlike orthospaces, orthomodular spaces include a distinguished subset $\Omega$ which is subject to conditions 2, 3, and 4 of Definition 2.10. Second, whereas the underlying frame reduct of an orthospace is an orthoframe, namely a set along with a binary relation that is irreflexive and symmetric, the underlying frame reduct of an orthomodular space is that of an orthomodular frame, which has a much more fine-grained relational structure. For instance, whereas the relation $\perp$ in orthospace is irreflexive on the entire carrier set, the irreflexivity of $\perp$ in orthomodular space is restricted to $X\setminus\{\omega\}$.

Recall that for a meet-semilattice $\mathfrak{A}=\langle A;\cdot,0\rangle$, a \emph{filter} is a non-empty subset $x\subseteq A$ that is upward closed and closed under finite meets. Moreover, recall that a filter $x\subseteq A$ is \emph{proper} if $x\not=A$ and \emph{improper} if $0\in x$. For the remainder of this work, by a filter, we mean a non-empty and possibly improper filter. Note that for any element $a\in A$, its principal upset ${\uparrow}a$ is a filter, known as the \emph{principal filter} generated by $a$. It is easy to see that in a finite lattice $\mathfrak{A}=\langle A;\cdot,+\rangle$, every filter $x\subseteq\mathfrak{A}$ is principal, i.e., $x={\uparrow}a$ for some $a\in\ A$.    

If $\mathcal{X}=\langle X;\mathcal{T}_X\rangle$ is a topological space and $\mathcal{B}_X$ is a basis for $\mathcal{X}$, then a family $\mathcal{C}:=\{U\}_{i\in I}\subseteq\mathcal{B}_X$ is a \emph{basic open cover} of a subset $V\subseteq X$ if $V\subseteq\bigcup_{i\in I}U_i$. Moreover, a finite subset of $\mathcal{C}$ whose union still contains $V$ is a \emph{finite subcover}. A topological space $\mathcal{X}$ is then said to be \emph{compact} provided every basic open cover has a finite subcover. Subbasic open covers are defined analogously but with respect to the subbasis of a topological space. Our compactness proof in Lemma 3.5 relies on the well-known characterization of compact spaces. 
\begin{theorem}[Alexander's Subbase Theorem]
Let $\mathcal{X}=\langle X;\mathcal{T}_X\rangle$ be a topological space and let $\mathcal{S}$ be a subbasis of $\mathcal{X}$. Then $\mathcal{X}$ is a compact topological space if and only if every basic open cover of $\mathcal{X}$ by elements of $\mathcal{S}$ has a finite subcover. 
\end{theorem}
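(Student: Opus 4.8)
The plan is to prove the nontrivial implication by a maximal‑counterexample argument based on Zorn's Lemma; the converse is immediate. For the converse, if $\mathcal{X}$ is compact then, since $\mathcal{S}\subseteq\mathcal{T}_X$, any cover of $X$ by members of $\mathcal{S}$ is in particular an open cover, and so (refining it to a basic open cover if one wishes to apply the definition of compactness to the letter) it admits a finite subcover.

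For the forward implication, I would assume that every cover of $X$ by members of $\mathcal{S}$ has a finite subcover and suppose, for contradiction, that $\mathcal{X}$ is not compact. Recalling that a space is compact iff every open cover has a finite subcover, fix an open cover of $X$ with no finite subcover, and let $\mathcal{P}$ be the set of \emph{all} open covers of $X$ admitting no finite subcover, partially ordered by inclusion. Then $\mathcal{P}\neq\emptyset$, and every chain $\mathcal{C}\subseteq\mathcal{P}$ has the union $\bigcup\mathcal{C}$ as an upper bound lying in $\mathcal{P}$: it is certainly an open cover, and any finite subfamily of it is already contained in a single member of the chain, which would then have a finite subcover, a contradiction. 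By Zorn's Lemma, $\mathcal{P}$ has a maximal element $\mathcal{M}$.

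Two features of $\mathcal{M}$ drive the argument. First, by maximality, for every open set $V\notin\mathcal{M}$ the cover $\mathcal{M}\cup\{V\}$ has a finite subcover, so there is a finite $\mathcal{F}\subseteq\mathcal{M}$ with $V\cup\bigcup\mathcal{F}=X$. Second, $\mathcal{M}\cap\mathcal{S}$ cannot cover $X$: otherwise the hypothesis would yield a finite subcover of $\mathcal{M}\cap\mathcal{S}$, hence of $\mathcal{M}$, contradicting $\mathcal{M}\in\mathcal{P}$. So I choose $x\in X$ with $x\notin\bigcup(\mathcal{M}\cap\mathcal{S})$. Since $\mathcal{M}$ covers $X$, pick $M\in\mathcal{M}$ with $x\in M$; since finite intersections of members of $\mathcal{S}$ form a basis of $\mathcal{X}$, pick $S_1,\dots,S_n\in\mathcal{S}$ with $x\in S_1\cap\dots\cap S_n\subseteq M$. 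Each $S_i$ contains $x$, so $S_i\notin\mathcal{M}$, and by the first feature there is a finite $\mathcal{F}_i\subseteq\mathcal{M}$ with $S_i\cup\bigcup\mathcal{F}_i=X$. Then $\{M\}\cup\mathcal{F}_1\cup\dots\cup\mathcal{F}_n$ is a finite subfamily of $\mathcal{M}$ covering $X$, because any point lying in none of the sets of any $\mathcal{F}_i$ must lie in every $S_i$, hence in $S_1\cap\dots\cap S_n\subseteq M$. This finite subcover of $\mathcal{M}$ contradicts $\mathcal{M}\in\mathcal{P}$, completing the proof.

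The only genuinely delicate points are the verification that unions of chains stay in $\mathcal{P}$ (so Zorn's Lemma applies) and the final combinatorial check that $\{M\}\cup\bigcup_{i\le n}\mathcal{F}_i$ exhausts $X$; both are short once set up. I expect the main conceptual hurdle is the passage from a maximal non‑refinable open cover $\mathcal{M}$ to the subbasic sets $S_i$ sitting inside a chosen $M\in\mathcal{M}$ — this is what lets one leverage the hypothesis about covers by elements of $\mathcal{S}$ alone.
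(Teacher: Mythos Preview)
Your argument is correct: the Zorn's-Lemma construction of a maximal open cover $\mathcal{M}$ with no finite subcover, followed by the subbasis trick at a point $x\notin\bigcup(\mathcal{M}\cap\mathcal{S})$, is one of the standard proofs of Alexander's theorem, and you have carried out both the chain-closure check and the final covering check carefully. (One cosmetic point: what you call the ``forward implication'' is, relative to the theorem as stated, the right-to-left direction; the content is right, only the label is swapped.)

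The paper, however, does not prove the theorem at all: it simply records that the result depends on the Axiom of Choice via the Boolean Prime Ideal Theorem and defers to Davey and Priestley. So there is no proof in the paper to compare against in detail. It is worth noting one genuine difference in strength: Alexander's Subbase Theorem is known to be equivalent over $\mathrm{ZF}$ to the Boolean Prime Ideal Theorem, which is strictly weaker than Zorn's Lemma. Your proof therefore invokes more choice than is needed; a BPI-based argument (e.g., via ultrafilters, showing every ultrafilter converges) would match the paper's remark more closely. That said, for the purposes of this paper the Zorn's-Lemma route is entirely adequate and arguably cleaner to write out.
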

\begin{proof}
The proof relies on the Axiom of Choice by way of the Boolean Prime Ideal Theorem. See for instance Davey and Priestley \cite[p. 278--279]{davey}.  
\end{proof}
\begin{definition}
If $\mathfrak{A}=\langle A;\cdot,-,0\rangle$ is an orthomodular lattice, then the ordered relational topological space $\mathscr{S}_0(\mathfrak{A})=\langle\mathfrak{F}(\mathfrak{A});\subseteq,\perp_A,\mathfrak{P}(\mathfrak{A}),\mathcal{T}(\mathcal{S})\rangle$ is defined as follows: 
\begin{enumerate}
    \item $\mathfrak{F}(\mathfrak{A})$ is the collection of all filters of $\mathfrak{A}$;
    \item $\mathfrak{P}(\mathfrak{A})$ is the collection of all principal filters of $\mathfrak{A}$;
    \item $\subseteq$ is subset inclusion;
    \item for all $x,y\in\mathfrak{F}(\mathfrak{A})$, define $x\perp_Ay$ iff there exists $a\in A$ such that $a\in x$ and $-a\in y$;
    \item $\mathcal{T}(\mathcal{S})$ is the topology on $\mathfrak{F}(\mathfrak{A})$ generated by the subbasis: \[\mathcal{S}=\{h(a): a\in A\}\cup\{\mathfrak{F}(\mathfrak{A})\setminus h(a): a\in A\}\] such that: \[\mathfrak{F}(\mathfrak{A})\setminus h(a)=\{x\in\mathfrak{F}(\mathfrak{A}):x\not\in h(a)\}\] and $h\colon \mathfrak{A}\to \wp(\mathfrak{F}(\mathfrak{A}))$ is a function defined by: \[h(a)=\{x\in\mathfrak{F}(\mathfrak{A}): a\in x\}.\]
\end{enumerate}
\end{definition}
The following lemma verifies that the topological space constructed in Definition 3.4 from the filter spectrum of an orthomodular lattice is an orthomodular space. 

\begin{lemma}[Dual space of an orthomodular lattice]\label{lattice to space}
If $\mathfrak{A}=\langle A;\cdot,-,0\rangle$ is an orthomodular lattice, then the ordered relational topological space $\mathscr{S}_0(\mathfrak{A})=\langle\mathfrak{F}(\mathfrak{A});\subseteq,\perp_A,\mathfrak{P}(\mathfrak{A}),\mathcal{T}(\mathcal{S})\rangle$ is an orthomodular space.
\end{lemma}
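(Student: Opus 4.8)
The plan is to verify the six conditions of Definition~\ref{oms} for the space $\mathscr{S}_0(\mathfrak{A})$, taking as given the standard facts about filters in orthomodular lattices. First I would record the basic properties of the map $h$: it is injective (any two distinct elements are separated by a principal filter), it satisfies $h(a\cdot b)=h(a)\cap h(b)$, and — using that $\perp_A$ is defined via orthocomplements together with Proposition~\ref{oml} — it satisfies $h(a)^{\perp}=h(-a)$, where $^{\perp}$ is the orthogonal complement in the frame $\langle\mathfrak{F}(\mathfrak{A});\subseteq,\perp_A,\mathfrak{P}(\mathfrak{A})\rangle$. The computation $h(a)^\perp = h(-a)$ is the crux of everything: the inclusion $h(-a)\subseteq h(a)^\perp$ is immediate from the definition of $\perp_A$ and the symmetry of $\perp_A$, while the reverse inclusion requires showing that if a filter $x$ is $\perp_A$-orthogonal to every filter containing $a$ — in particular to ${\uparrow}a$ — then $-a\in x$; this is where one uses that in an orthomodular lattice $b\perp_A {\uparrow}a$ forces some element of $b$ below $-a$, hence $-a\in b$ by upward closure. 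From these identities it follows that $\mathcal{CO}(\mathcal{X})^{\dagger}$ is exactly $\{h(a):a\in A\}$, which gives conditions~3 and~4 at once (closure under $\cap$ and $^{\perp}$, and that each such set is the principal upset ${\uparrow}({\uparrow}a)$ with ${\uparrow}a\in\mathfrak{P}(\mathfrak{A})=\Omega$).

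Next I would check that $\langle\mathfrak{F}(\mathfrak{A});\subseteq,\perp_A,\mathfrak{P}(\mathfrak{A})\rangle$ is an orthomodular frame in the sense of Definition~\ref{omf}. Conditions~1 and~2 hold because filters are closed under intersection and $A$ itself (containing $0$) is the top filter $\omega$; condition~6 holds since an intersection of principal filters is again principal (${\uparrow}a\cap{\uparrow}b={\uparrow}(a\cdot b)$); conditions~5,~7,~8,~9 are direct unwindings of the definition of $\perp_A$ using properties of $-$ (for~5 one needs $a\perp_A a$ to force $a\wedge -a$-type reasoning leading to the improper filter, i.e.\ only $\omega$ is self-orthogonal). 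Condition~3 follows from the computation $({\uparrow}a)^{\perp_A\text{-upset}} = {\uparrow}(-a)$ together with involutivity of $-$, using that principal filters correspond to the $\Omega$-points. The orthomodularity axiom (condition~4) is where I would lean directly on Hartonas~\cite{hartonas} or re-derive it: one translates the frame condition, for principal filters ${\uparrow}a\supseteq{\uparrow}b$ (i.e.\ $a\leq b$) with the stated orthogonality hypothesis collapsing to $\omega$, into the algebraic statement $-a\cdot b = 0$, and then invokes Proposition~\ref{oml} to conclude $a=b$, i.e.\ ${\uparrow}a={\uparrow}b$.

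For the topological conditions: condition~5 is immediate, since $x\not\subseteq y$ means some $a\in x\setminus y$, whence $x\in h(a)$, $y\notin h(a)$ and $h(a)\in\mathcal{CO}(\mathcal{X})^{\dagger}$ (clopen because $h(a)$ and its complement are subbasic opens, $\perp$-stable because $h(a)=h(--a)=h(-a)^\perp=h(a)^{\perp\perp}$). Condition~6 is similar: if $x\perp_A y$ pick the witnessing $a\in x$ with $-a\in y$; then $x\in h(a)$ and $y\in h(-a)=h(a)^{\perp}$. The remaining and most substantive topological point is compactness (condition~2). Here I would apply Alexander's Subbase Theorem: it suffices to show every subbasic cover has a finite subcover. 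Given a subbasic cover of $\mathfrak{F}(\mathfrak{A})$ by sets $\{h(a_i)\}_{i\in I}\cup\{\mathfrak{F}(\mathfrak{A})\setminus h(b_j)\}_{j\in J}$ with no finite subcover, I would show the set $\{b_j:j\in J\}$ together with the failure of the $h(a_i)$'s to cover generates a filter avoiding all the $a_i$ — concretely, that the filter $F$ generated by $\{b_j : j\in J\}$ (or $\{0\}$ if $J=\emptyset$) is not covered, because if $F\in h(a_i)$ then $a_i\geq b_{j_1}\cdots b_{j_k}$ for finitely many $j$'s, and then $h(a_i)\cup(\mathfrak{F}(\mathfrak{A})\setminus h(b_{j_1}))\cup\cdots\cup(\mathfrak{F}(\mathfrak{A})\setminus h(b_{j_k}))$ would already cover, contradicting our assumption; and $F\notin\mathfrak{F}(\mathfrak{A})\setminus h(b_j)$ for any $j$ by construction — so $F$ lies in none of the cover sets, a contradiction. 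I expect this compactness argument, and in particular pinning down exactly which filter to take as the "escaping point", to be the main obstacle; the frame axioms and the $h$-identities, while numerous, are routine once $h(a)^\perp=h(-a)$ is in hand.
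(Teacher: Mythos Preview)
Your plan matches the paper's approach closely: verify the nine frame axioms of Definition~\ref{omf} (the paper cites Hartonas for conditions~3 and~4; you offer the direct translation via Proposition~\ref{oml}, which is fine), then the six space axioms of Definition~\ref{oms}, with compactness handled by Alexander's Subbase Theorem via exactly the ``escaping filter generated by the $b_j$'' argument you sketch.

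One genuine gap: your claim that the $h$-identities alone yield $\mathcal{CO}(\mathscr{S}_0(\mathfrak{A}))^{\dagger}=\{h(a):a\in A\}$ is too quick. The identities $h(a\cdot b)=h(a)\cap h(b)$ and $h(a)^{\perp}=h(-a)$ show each $h(a)$ is clopen and $\perp$-stable, but the reverse inclusion---that \emph{every} clopen $\perp$-stable set is of the form $h(a)$---does not follow from them; it needs a separate compactness-based argument. The paper does not prove this from scratch either but cites \cite[Lemma~3.4]{bimbo}. Without this step, conditions~3 and~4 of Definition~\ref{oms} are only verified on the image of $h$, not on all of $\mathcal{CO}(\mathscr{S}_0(\mathfrak{A}))^{\dagger}$. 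Two smaller points: the identity $h(a)^{\perp}=h(-a)$ holds in any ortholattice by the argument you give and does not use Proposition~\ref{oml}; and in your compactness edge case ``or $\{0\}$ if $J=\emptyset$'', the filter generated by $\{0\}$ is $\omega$, which lies in every $h(a_i)$---you want the least filter $\{1\}$ there instead.
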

\begin{proof} 
The result that $\langle\mathfrak{F}(\mathfrak{A});\subseteq,\perp_A,\mathfrak{P}(\mathfrak{A})\rangle$ is an orthomodular frame is similar to the completeness proof for orthomodular quantum logic given by Hartonas in \cite{hartonas}. We briefly sketch the steps.

1.) It is well known that $\langle\mathfrak{F}(\mathfrak{A});\subseteq\rangle$ is a complete lattice and hence a meet-semilattice, so condition 1 of Definition \ref{omf} is satisfied.
   
2.) For condition 2, let $\omega$ be the improper filter in $\mathfrak{A}$ so that $\omega=A$. Therefore $x\subseteq \omega$ for each filter $x\in\mathfrak{F}(\mathfrak{A})$ and hence $\omega$ is an upper bound of $\langle\mathfrak{F}(\mathfrak{A});\subseteq\rangle$. Moreover $\omega\in\mathfrak{P}(\mathfrak{A})$ since $\omega={\uparrow}0$. Notice that $\omega$ is the least upper bound of $\langle\mathfrak{F}(\mathfrak{A});\subseteq\rangle$, and hence unique. Thus $\langle\mathfrak{F}(\mathfrak{A});\subseteq\rangle$ is in fact a bounded meet-semilattice.     

3. and 4.) Conditions 3 and 4 of Definition \ref{omf} are verified by using Hartonas' Lemmas 7 and 9 in \cite{hartonas}.

5.) For Condition 5, assume $x\perp_A x$. Then by the definition of $\perp_A$, there exists $a\in A$ such that $a,-a\in x$ and hence $a\cdot -a\in x$ since $x$ is a filter. Notice that $a\cdot -a=0$ and hence $0\in x$. Therefore, $x$ is the improper filter in $\mathfrak{A}$ and thus $x=\omega$.

6.) For condition 6, let $x,y\in\mathfrak{P}(\mathfrak{A})$ so that $x={\uparrow}a$ and $y={\uparrow}b$ for some $a,b\in A$. Then: 
\begin{align*}
    {\uparrow}a\cap{\uparrow}b&=\{c\in A:c\in{\uparrow}a\wedge c\in{\uparrow}b\}\tag{by the definition of $\cap$}
    \\&=\{c\in A: a\leq c\wedge b\leq c\}\tag{by the definition of $\uparrow$}
    \\&=\{c\in A:a\cdot b\leq c\}\tag{by the definition of $\cdot$}
    \\&=\{c\in A:c\in{\uparrow}(a\cdot b)\}\tag{by the definition of $\uparrow$}
\end{align*}
 Since ${\uparrow}(a\cdot b)\in\mathfrak{P}(\mathfrak{A})$, condition 6 is satisfied. 

7.) For condition 7, choose any filter $x\in\mathfrak{F}(\mathfrak{A})$. Clearly, for any element $a\in x$, we have $-a\in\omega$ since $\omega={\uparrow}0=A$ and $\mathfrak{A}$ is a complemented lattice. By the definition of $\perp_A$ we thus have $x\perp_A\omega$.

8.) For condition 8, assume $x\perp_Ay$. Then there exists $a\in A$ such that $a\in x$ and $-a\in y$. However $a=--a$, since $-$ is an involution, and hence, $--a\in x$. By the definition of $\perp_A$, we have $y\perp_Ax$. 

9.) For Condition 9, let $x,y,z\in\mathfrak{F}(\mathfrak{A})$ and assume $x\perp_Ay$ and $x\subseteq z$. By the former hypothesis, there exists some $a\in A$ such that $a\in x$ and $-a\in y$. By the latter hypothesis, we then have $a\in z$. From this, together with the fact that $-a\in y$, we conclude by the definition of $\perp_A$ that $z\perp_Ay$, as desired.

 We now proceed by verifying that the topology of $\mathscr{S}_0(\mathfrak{A})$ described in Definition 3.4 makes $\mathscr{S}_0(\mathfrak{A})$ an orthomodular space in the sense of Definition 3.2.   

1.) Parts 1 through 9 of this proof have shown that $\langle\mathfrak{F}(\mathfrak{A});\preceq,\perp_A,\mathfrak{P}(\mathfrak{A})\rangle$ is an orthomodular frame whenever $\mathfrak{A}$ is an orthomodular lattice. and hence condition 1 of Definition 3.2 is satisfied.

2.) The compactness of $\mathscr{S}_0(\mathfrak{A})$ is proved in the same way as Goldblatt in \cite[Proposition 3]{goldblatt1} as the proof runs the same with arbitrary (possibly improper) filters. Let $B,C\subseteq A$ be subsets of the carrier set $A$ of $\mathfrak{A}$ and let: \[\mathcal{S}_1=\{h(b):b\in B\}\cup\{\mathfrak{F}(\mathfrak{A})\setminus h(c):c\in C\}\] be a subbasic open cover of $\mathscr{S}_0(\mathfrak{A})$. By Alexander's Subbase Theorem (i.e., Theorem 3.3), it suffices to prove that $\mathcal{S}_1$ has a finite subcover. For the sake of contradiction, assume that $\mathcal{S}_1$ has no finite subcover. Then for $c_1,\dots,c_n\in C$ and $b\in B$, there exists some filter $x\in\mathfrak{F}(\mathfrak{A})$ such that:
\begin{equation*}
    x\not\in\big(\bigcup_{i\leq n}\mathfrak{F}(\mathfrak{A})\setminus h(c_i)\big)\cup h(b).\hspace{1cm}(*)
\end{equation*}
Notice that when $x=\omega$, i.e., $0\in x$, we immediately have a contradiction as $b,c_i\in\omega$ for any $i\in I$, by the definition of $\omega$. If $x$ is a proper filter i.e., $x\not=\omega$, then $c_1,\dots, c_n\in x$ but $b\not\in x$. Since $x$ is a filter, it immediately follows that $\bigwedge_{i=1}^nc_i\not\leq b$ where $\bigwedge^n_{i=1}c_i:=\inf\{c_1,\dots,c_n\}.$ Now define: \[x_C:=\{d\in A:\exists c_1\dots\exists c_n\in C . \bigwedge_{i=1}^nc_i\leq d\}\] as the filter generated by $C$. Then clearly $C\subseteq x_C$ and hence, $x_C\in\bigcap_{c\in C}h(c)$. Moreover, $x_C$ and $B$ are disjoint since $\bigwedge_{i=1}^nc_i\not\leq b$ and therefore, $x_C\not\in\bigcup_{b\in B}h(b)$. This, together with $(*)$ then implies that: \[x\not\in\Big(\bigcup_{b\in B}h(b)\Big)\cup\Big(\bigcup_{c\in C}\mathfrak{F}(\mathfrak{A})\setminus h(c)\Big)\] which contradicts the fact that the subbasis $\mathcal{S}_1$ is a cover of $\mathscr{S}_0(\mathfrak{A})$. Hence, $\mathcal{S}_1$ has a finite subcover, and by Alexander's Subbase Theorem, $\mathscr{S}_0(\mathfrak{A})$ is a compact space.        

3.) We now show that $h(a)\in\mathcal{CO}(\mathscr{S}_0(\mathfrak{A}))^{\dagger}$ for every $a\in A$. Notice that for any element $a\in A$, its $h$-image is a subbasic open set of $\mathscr{S}_0(\mathfrak{A})$ and hence $h(a)\in \mathcal{O}(\mathscr{S}_0(\mathfrak{A}))$. The definition of the subbasis $\mathcal{S}$ of $\mathcal{T}(\mathcal{S})$ on $\mathscr{S}_0(\mathfrak{A})$ then ensures that $h(a)\in \mathcal{CO}(\mathscr{S}_0(\mathfrak{A}))$ and by Proposition 5 in \cite{hartonas}, it was shown that $h(a)$ is $\perp$-stable so $h(a)\in\mathcal{CO}(\mathscr{S}_0(\mathfrak{A}))^{\dagger}$. The proof that each $U\in\mathcal{CO}(\mathscr{S}_0(\mathfrak{A}))^{\dagger}$ is of the form $h(a)$ for some $a\in A$ follows from \cite[Lemma 3.4]{bimbo} and hence $\{h(a):a\in A\}=\mathcal{CO}(\mathscr{S}_0(\mathfrak{A}))^{\dagger}$.  The closure of $\mathcal{CO}(\mathscr{S}_0(\mathfrak{A}))^{\dagger}$ under $\cap$ and $^{\perp}$ follows directly from the definition of $\omega$ along with \cite[Lemma 3.3]{bimbo}. This establishes that condition 3 holds.

4.) For condition 4, we want to show that each $U\in\mathcal{CO}(\mathscr{S}_0(\mathfrak{A}))^{\dagger}$ is of the form ${\uparrow}z$ for some principal filter $z\in\mathfrak{P}(\mathfrak{A})$, i.e., $h(a)={\uparrow}z$. Hence, given an arbitrary filter $x\in\mathfrak{F}(\mathfrak{A})$, choose any element $a\in A$ and let $z={\uparrow}a\in\mathfrak{P}(\mathfrak{A})$. Clearly if $x\in h(a)$, then $a\in x$ and hence for any $b\in z$, we have $a\leq b$ since $z={\uparrow}a$ and thus $b\in x$ since $x$ is a filter. Therefore $z\subseteq x$ and hence $x\in{\uparrow}z$. Conversely, if $x\in{\uparrow}z$, then $z={\uparrow}a\subseteq x$. Since $a\in{\uparrow}a$, we then have $a\in x$ so $x\in h(a)$. Hence $h(a)={\uparrow}z$ for some $z\in\mathfrak{P}(\mathfrak{A})$ i.e., when $z={\uparrow}a$. Since $\{h(a):a\in A\}=\mathcal{CO}(\mathscr{S}_0(\mathfrak{A}))^{\dagger}$, condition 4 holds.

5.) For condition 5, let $x,y\in\mathfrak{F}(\mathfrak{A})$ and assume $x\not\subseteq y$. Then there exists $a\in A$ such that $a\in x$ and $a\not\in y$, and hence $x\in h(a)$ and $y\not\in h(a)$ where $h(a)=U$ for some $U\in\mathcal{CO}(\mathscr{S}_0(\mathfrak{A}))^{\dagger}$.

6.) For condition 6, assume that $x\perp_Ay$. Then there exists $a\in A$ such that $a\in x$ and $-a\in y$, 
hence $x\in h(a)$ and $y\in h(-a)$. Note that $y\in h(-a)$ iff $-a\in y$ by the definition of $h$, iff $y\perp_Ax$ for each $x\in h(a)$ by the definition of $\perp_A$, iff $y\in h(a)^{\perp}$.  

Therefore, we conclude that $\mathscr{S}_0(\mathfrak{A})$ is an orthomodular space whenever $\mathfrak{A}$ is an orthomodular lattice.  
\end{proof}

\begin{definition}
Let $\mathcal{X}=\langle X;\mathcal{T}_X\rangle$ be a topological space. Then,
\begin{enumerate}
    \item $\mathcal{X}$ is a \emph{totally disconnected space} if for all points $x,y\in X$ such that $x\not=y$, there exists a clopen set $U\in\mathcal{CO}(\mathcal{X})$ such that $x\in U$ and $y\not\in U$; 
    \item $\mathcal{X}$ is a \emph{Stone space} if $\mathcal{X}$ is a compact totally disconnected space. 
\end{enumerate}
\end{definition}
By Lemma \ref{lattice to space}, we immediately have the following. 
\begin{corollary}
If $\mathfrak{A}$ is an orthomodular lattice, then $\mathscr{S}_0(\mathfrak{A})$ is a Stone space. Moreover, $\perp_A$ is an orthogonality relation on $\mathfrak{F}(\mathfrak{A})\setminus\{\omega\}$, that is, $\perp_A$ is irreflexive and symmetric on the set of proper filters in $\mathfrak{A}$.  
\end{corollary}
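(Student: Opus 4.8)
The plan is to read off this corollary almost immediately from Lemma~\ref{lattice to space} together with the definitions just recorded. First I would establish that $\mathscr{S}_0(\mathfrak{A})$ is a Stone space. By Lemma~\ref{lattice to space}, $\mathscr{S}_0(\mathfrak{A})$ is an orthomodular space, so condition~2 of Definition~\ref{oms} already gives compactness of $\langle\mathfrak{F}(\mathfrak{A});\mathcal{T}(\mathcal{S})\rangle$; it remains only to verify total disconnectedness in the sense of Definition~3.7(1). So let $x,y\in\mathfrak{F}(\mathfrak{A})$ with $x\neq y$. Without loss of generality there is $a\in A$ with $a\in x$ and $a\notin y$ (if instead some $a$ lies in $y\setminus x$, swap the roles). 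Then $x\in h(a)$ and $y\notin h(a)$, and $h(a)$ is clopen: it is open since $h(a)\in\mathcal{S}$ is subbasic, and its complement $\mathfrak{F}(\mathfrak{A})\setminus h(a)$ is also in $\mathcal{S}$, hence open, so $h(a)$ is closed. Thus $h(a)\in\mathcal{CO}(\mathscr{S}_0(\mathfrak{A}))$ separates $x$ from $y$, and $\mathscr{S}_0(\mathfrak{A})$ is totally disconnected, hence a Stone space.

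For the second assertion, I would invoke parts 5, 8, and (restricted) irreflexivity from the frame verification inside the proof of Lemma~\ref{lattice to space}. Symmetry of $\perp_A$ on all of $\mathfrak{F}(\mathfrak{A})$ is exactly part 8 of that proof (using that $-$ is an involution), so in particular $\perp_A$ is symmetric on $\mathfrak{F}(\mathfrak{A})\setminus\{\omega\}$. For irreflexivity, suppose $x\in\mathfrak{F}(\mathfrak{A})$ with $x\perp_A x$; part 5 of the proof of Lemma~\ref{lattice to space} shows that then $0\in x$, i.e. $x$ is the improper filter $\omega$. Hence no $x\neq\omega$ satisfies $x\perp_A x$, i.e. $\perp_A$ is irreflexive on the set of proper filters. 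Combining, $\perp_A$ restricted to $\mathfrak{F}(\mathfrak{A})\setminus\{\omega\}$ is irreflexive and symmetric, which is precisely the definition of an orthogonality relation (cf.\ the orthoframe relation of \cite{goldblatt1,bimbo}).

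There is essentially no obstacle here: the corollary is a bookkeeping consequence of Lemma~\ref{lattice to space}, Definition~\ref{oms}, Definition~3.7, and the already-proved frame conditions. The only point requiring the tiniest care is the ``without loss of generality'' step in total disconnectedness --- one must note that $x\neq y$ as sets means the symmetric difference is nonempty, and after possibly interchanging $x$ and $y$ we may assume the witnessing element lies in $x\setminus y$; then $h(a)$ does the job. I would keep the write-up to a few lines, citing the relevant parts of the proof of Lemma~\ref{lattice to space} explicitly rather than reproving them.
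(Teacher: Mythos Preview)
Your proposal is correct and follows essentially the same approach as the paper. The only cosmetic difference is that for total disconnectedness the paper cites condition~5 of Definition~\ref{oms} abstractly (since $\mathscr{S}_0(\mathfrak{A})$ is an orthomodular space), whereas you unpack that condition concretely by exhibiting the separating clopen set $h(a)$; this is exactly the content of the verification of condition~5 inside the proof of Lemma~\ref{lattice to space}, so the two arguments coincide.
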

\begin{proof}
We have already seen in the proof of Lemma \ref{lattice to space} that $\mathscr{S}_0(\mathfrak{A})$ is a compact space. Since $\mathscr{S}_0(\mathfrak{A})$ is an orthomodular space, condition 5 of orthomodular spaces ensures that $\mathscr{S}_0(\mathfrak{A})$ is totally disconnected. 

To see that $\perp_A$ is irreflexive on $\mathfrak{F}(\mathfrak{A})\setminus\{\omega\}$, let $x\in\mathfrak{F}(\mathfrak{A})\setminus\{\omega\}$. Then if $x\perp_Ax$, there exists $a\in A$ such that $a,-a\in x$ by the definition of $\perp_A$. Then $a\cdot -a\in x$ since $x$ is a filter. Then $0\in x$ since $a\cdot -a=0$ so $x=\omega$, but this contradicts our hypothesis that $x\in\mathfrak{F}(\mathfrak{A})\setminus\{\omega\}$. Therefore $x\not\perp_Ax$. The symmetry of $\perp_A$ is guaranteed by condition 8 of orthomodular frames.  
\end{proof}
\begin{lemma}[Dual lattice of an orthomodular space]\label{space to lattice}
If $\mathcal{X}=\langle X;\preceq,\perp,\Omega,\mathcal{T}_X\rangle$ is an orthomodular space, then the algebra $\mathscr{A}_0(\mathcal{X})=\langle\mathcal{CO}(\mathcal{X})^{\dagger};\cap, \hspace{.015cm}^{\perp},\{\omega\}\rangle$ is an orthomodular lattice. Hence, $\mathscr{A}_0(\mathcal{X})$ denotes an orthomodular lattice whenever $\mathcal{X}$ is an orthomodular space.
\end{lemma}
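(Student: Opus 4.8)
The plan is to verify that $\mathscr{A}_0(\mathcal{X}) = \langle \mathcal{CO}(\mathcal{X})^{\dagger}; \cap, \hspace{.015cm}^{\perp}, \{\omega\}\rangle$ satisfies Definition 2.6 together with the orthomodularity law of Definition 2.7. First I would observe that by condition 3 of Definition \ref{oms}, $\mathcal{CO}(\mathcal{X})^{\dagger}$ is closed under $\cap$ and $^{\perp}$, so the operations are well-defined on the carrier set; it remains to check that $\{\omega\}$ itself lies in $\mathcal{CO}(\mathcal{X})^{\dagger}$ (it is $X^{\perp}$ by conditions 5 and 7 of Definition \ref{omf}, hence $\perp$-stable, and it is clopen since $X$ is compact and $\{\omega\}$ is closed being an intersection of clopen $\perp$-stable sets, or directly using total disconnectedness as in Corollary 3.8's analogue). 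Next I would confirm that $\langle \mathcal{CO}(\mathcal{X})^{\dagger}; \cap, \{\omega\}\rangle$ is a meet-semilattice with least element: associativity, commutativity, and idempotence of $\cap$ are inherited from $\wp(X)$, and $\{\omega\}$ is the least element because every $U \in \mathcal{CO}(\mathcal{X})^{\dagger}$ contains $\omega$ — this follows from condition 4 of Definition \ref{oms} (each such $U$ is of the form ${\uparrow}z$ and $\omega$ is the top of $\langle X;\preceq\rangle$ by condition 2 of Definition \ref{omf}).

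The heart of the argument is showing that $^{\perp}$ is an orthocomplementation in the sense of Definition 2.6(2). For (c), $U = U^{\perp\perp}$ is exactly the definition of $\perp$-stability, so this is immediate for members of $\mathcal{CO}(\mathcal{X})^{\dagger}$. For (b), antitonicity, this is a formal consequence of the definition of $^{\perp}$ on $\wp(X)$: if $U \subseteq V$ then $V^{\perp} \subseteq U^{\perp}$ directly from the universal quantifier in the definition of $U^{\perp}$. For (a), $U \cap U^{\perp} = \{\omega\}$: the inclusion $\{\omega\} \subseteq U \cap U^{\perp}$ holds since $\omega \in U$ (shown above) and $\omega \in U^{\perp}$ because $\omega \perp y$ for all $y$ would need $y \perp \omega$, which is condition 7 and symmetry (condition 8) of Definition \ref{omf} — wait, more carefully, $\omega \in U^{\perp}$ means $\omega \perp y$ for all $y \in U$, which is condition 7 directly. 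For the reverse inclusion, if $x \in U \cap U^{\perp}$ then $x \perp x$ (taking $y = x$ in the definition, since $x \in U$), so $x = \omega$ by condition 5 of Definition \ref{omf}. This is the step I expect to require the most care, since it is where the frame conditions genuinely enter.

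Finally I would establish the orthomodularity law $A \subseteq B \Rightarrow B = A \cup (A^{\perp} \cap B)$, where $\cup$ denotes the lattice join $(A^{\perp} \cap B^{\perp})^{\perp}$ and $\subseteq$ is the lattice order (which coincides with set inclusion, since $U \cap V = U$ iff $U \subseteq V$). By Proposition \ref{oml} it suffices to show the equivalent condition: if $A \subseteq B$ and $A^{\perp} \cap B = \{\omega\}$, then $A = B$. This is precisely the content of condition 4 of Definition \ref{omf} — the orthomodularity axiom — once translated through the representation: condition 4 says that for $x, y \in \Omega$, if $y \preceq x$ and every $z$ with $y \preceq z$ and $z \perp x$ equals $\omega$, then $x = y$. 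The translation requires using condition 4 of Definition \ref{oms} to write $A = {\uparrow}z_A$ and $B = {\uparrow}z_B$ with $z_A, z_B \in \Omega$, noting that set inclusion ${\uparrow}z_B \subseteq {\uparrow}z_A$ corresponds to $z_A \preceq z_B$, and unwinding the hypothesis $A^{\perp} \cap B = \{\omega\}$ via condition 6 of Definition \ref{oms} (the separation axiom for orthogonal points) to match the quantifier pattern in the frame axiom. I anticipate this last translation — reconciling the set-theoretic statement about clopen $\perp$-stable sets with the pointwise first-order orthomodularity axiom — to be the main obstacle, and it is likely where one invokes the corresponding lemma from Hartonas \cite{hartonas} (analogous to the use of his Lemmas 7 and 9 in the proof of Lemma \ref{lattice to space}).
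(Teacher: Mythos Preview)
Your proposal is correct and follows essentially the same route as the paper: verify the ortholattice axioms of Definition~2.5 directly, then reduce orthomodularity via Proposition~\ref{oml} to condition~4 of Definition~\ref{omf} by writing $A={\uparrow}z_A$ and $B={\uparrow}z_B$ using condition~4 of Definition~\ref{oms}. One small correction: in the final translation you do \emph{not} need condition~6 of Definition~\ref{oms} or any external lemma from \cite{hartonas}; the equality $({\uparrow}x)^{\perp}=\{z:z\perp x\}$ follows directly from condition~9 of Definition~\ref{omf}, after which the hypothesis $A^{\perp}\cap B=\{\omega\}$ unwinds verbatim to the antecedent of the frame orthomodularity axiom.
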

\begin{proof}
The proof is similar to the soundness theorem for orthomodular quantum logic in \cite{hartonas}. We first show that $\mathscr{A}_0(\mathcal{X})$ is an ortholattice in the sense of Definition 2.5.

1.) It is obvious that $\mathscr{A}_0(\mathcal{X})$ is a meet-semilattice as $\cap$ is clearly associative, commutative, and idempotent with respect to $\mathcal{CO}(\mathcal{X})^{\dagger}$. To see that $\{\omega\}$ is the least element of $\mathscr{A}_0(\mathcal{X})$, it suffices to show that $\{\omega\}\subseteq U$ for each $U\in\mathcal{CO}(\mathcal{X})^{\dagger}$, i.e., $\{\omega\}=\{\omega\}\cap U$. For the left-to-right inclusion, if $x\in\{\omega\}$, then $x=\omega$. By condition 4 of Definition 3.2, $U$ is a principal upset with respect to $\preceq$ and by condition 2 of Definition 2.10, $\omega$ is the top element of $\langle X;\preceq\rangle$, and hence $\omega\in U$. Therefore, it is obvious that $\omega\in\{\omega\}\cap U$. The right-to-left inclusion is trivial. Hence $\{\omega\}=\{\omega\}\cap U$ for each $U\in\mathcal{CO}(\mathcal{X})^{\dagger}$ and thus $\{\omega\}\subseteq U$ for each $U\in\mathcal{CO}(\mathcal{X})^{\dagger}$ which means that $\{\omega\}$ is the least element in $\mathscr{A}_0(\mathcal{X})$.      

  2.a.) To see that $U\cap U^{\perp}=\{\omega\}$ for any $U\in\mathcal{CO}(\mathcal{X})^{\dagger}$, first note that if $x\in U\cap U^{\perp}$, then $x\in U$ and $x\in U^{\perp}$ so $x\perp U$ and furthermore $x\perp x$ since $x\in U$. By hypothesis, $\mathcal{X}$ is an orthomodular space and hence $\langle X;\preceq,\perp,\Omega\rangle$ is an orthomodular frame. Thus by condition 5 of orthomodular frames, we have $x=\omega$ so $x\in\{\omega\}$.

The converse inclusion $\{\omega\}\subseteq U\cap U^{\perp}$ follows immediately from the fact that $U$ and $U^{\perp}$ are principal upsets (by condition 4 of Definition 3.2) and that $\omega$ is the top element in the meet-semilattice $\langle X;\preceq\rangle$.

  2.b.) To see that $^\perp$ is order reversing on $\mathcal{CO}(\mathcal{X})^{\dagger}$, let $U,V\in\mathcal{CO}(\mathcal{X})^{\dagger}$ and assume that $U\subseteq V$ and $x\in V^{\perp}$. There are two cases: First, if $x=\omega$, then $x\perp y$ for all $y\in U$ since $\omega\perp y$ for all $y\in X$ by condition 7 of orthomodular frames. Hence $x\in U^{\perp}$ by the definition of $^{\perp}$. Second, if $x\not=\omega$, then $x\perp y$ for all $y\in V$ by our hypothesis that $x\in V^{\perp}$ so $x\not\in V$ since $x\not=\omega$. Also $x\not\in U$ since $U\subseteq V$ by hypothesis. Then $x\in U^{\perp}$ since $x\not\in U$ and $x\not=\omega$. Notice that in general we have $U^{\perp}=\{X\setminus U\}\cup\{\omega\}$. We conclude that $\mathcal{CO}(\mathcal{X})^{\dagger}$ is an ortholattice for any orthomodular space $\mathcal{X}$.

    2.c.) It is trivial that $^{\perp}$ is an involution on $\mathcal{CO}(\mathcal{X})^{\dagger}$ since $(\mathcal{X})^{\dagger}:=\{U\in\wp(X):U=U^{\perp\perp}\}$ by definition, and hence $U=U^{\perp\perp}$ for any $U\in\mathcal{CO}(\mathcal{X})^{\dagger}$.

Parts 1 through 2.c of this proof establish that $\mathscr{A}_0(\mathcal{X})$ is an ortholattice whenever $\mathcal{X}$ is an orthomodular space.

Condition 2, Proposition 2.8.) To see that the ortholattice $\langle\mathcal{CO}(\mathcal{X})^{\dagger};\cap,\hspace{.015cm}^{\perp},\{\omega\}\rangle$ is indeed orthomodular, by Proposition \ref{oml}, it suffices to show: \[U\subseteq V\wedge U^{\perp}\cap V=\{\omega\}\hspace{.1cm}\Rightarrow\hspace{.1cm} U=V\] for all $U,V\in\mathcal{CO}(\mathcal{X})^{\dagger}$. Observe that by condition 4 of orthomodular spaces, each $U\in\mathcal{CO}(\mathcal{X})^{\dagger}$ is of the form ${\uparrow}z$ for some point $z\in\Omega$. Hence let $U={\uparrow}x$ and $V={\uparrow}y$ for some $x,y\in\Omega$. It therefore suffices to show: \[{\uparrow}x\subseteq{\uparrow}y\wedge ({\uparrow}x)^{\perp}\cap{\uparrow}y=\{\omega\}\hspace{.1cm}\Rightarrow\hspace{.1cm}{\uparrow}x={\uparrow}y.\] Let us assume the hypothesis and note that ${\uparrow}x\subseteq{\uparrow}y$ iff $y\preceq x$, thus ${\uparrow}x={\uparrow}y$ iff $x=y$. Moreover, observe that by the definition of $^\perp$ and condition 9 of orthomodular frames, we have $z\in({\uparrow}x)^{\perp}$ iff $z\perp x$ and by the definition of $\uparrow$, we also have $z\in{\uparrow}y$ iff $y\preceq z$. Hence it is easy to see that our hypothesis that ${\uparrow}x\subseteq{\uparrow}y$ and $({\uparrow}x)^{\perp}\cap {\uparrow}y=\{\omega\}$ is equivalent to the requirement that for all $x,y\in\Omega$, if $y\preceq x$ and for all $z\in X$, if $y\preceq z$ and $z\perp x$, then $z=\omega$. Thus by condition 4 of orthomodular frames, we have $x=y$ and therefore ${\uparrow}x={\uparrow}y$.

This completes the proof that $\mathscr{A}_0(\mathcal{X})$ is an orthomodular lattice whenever $\mathcal{X}$ is an orthomodular space. 
\end{proof}
\begin{theorem}[Topological representation of orthomodular lattices]\label{representation} For every orthomodular lattice $\mathfrak{A}=\langle A;\cdot,-,0\rangle$, there exists an isomorphism $\mathfrak{A}\to\mathscr{A}_0(\mathscr{S}_0(\mathfrak{A}))$. 
\end{theorem}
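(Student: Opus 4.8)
The plan is to show that the map $h\colon\mathfrak{A}\to\wp(\mathfrak{F}(\mathfrak{A}))$ from Definition 3.4, given by $h(a)=\{x\in\mathfrak{F}(\mathfrak{A}):a\in x\}$, corestricts to an isomorphism onto $\mathscr{A}_0(\mathscr{S}_0(\mathfrak{A}))=\langle\mathcal{CO}(\mathscr{S}_0(\mathfrak{A}))^{\dagger};\cap,{}^{\perp},\{\omega\}\rangle$. The proof of Lemma \ref{lattice to space} already establishes that $\mathscr{S}_0(\mathfrak{A})$ is an orthomodular space, so by Lemma \ref{space to lattice} the codomain is genuinely an orthomodular lattice; and part 3 of that proof already records that $\{h(a):a\in A\}=\mathcal{CO}(\mathscr{S}_0(\mathfrak{A}))^{\dagger}$, i.e.\ $h$ is surjective onto the carrier of $\mathscr{A}_0(\mathscr{S}_0(\mathfrak{A}))$. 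So three things remain: $h$ is injective, $h$ preserves the operations, and (since surjectivity is in hand) these together give the isomorphism.

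First I would verify that $h$ is a homomorphism. Preservation of $\cdot$: the computation ${\uparrow}a\cap{\uparrow}b={\uparrow}(a\cdot b)$ in part 6 of the proof of Lemma \ref{lattice to space} is essentially the statement $h(a\cap b)$-type identity at the level of filters; concretely, $a\cdot b\in x$ iff $a\in x$ and $b\in x$ for any filter $x$ (using upward closure and closure under meets), which is exactly $h(a\cdot b)=h(a)\cap h(b)$. Preservation of $0$: $h(0)=\{x\in\mathfrak{F}(\mathfrak{A}):0\in x\}=\{\omega\}$ since the only filter containing $0$ is the improper filter $A=\omega$. Preservation of $-$: I must show $h(-a)=h(a)^{\perp}$, where $h(a)^{\perp}$ is computed in $\mathscr{S}_0(\mathfrak{A})$ with respect to $\perp_A$. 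One inclusion: if $-a\in y$ then for every $x\in h(a)$ we have $a\in x$ and $-a\in y$, so $x\perp_A y$, hence $y\perp_A x$ for all $x\in h(a)$, giving $y\in h(a)^{\perp}$ — this is exactly the chain of equivalences displayed in part 6 of the proof of Lemma \ref{lattice to space}. For the reverse inclusion, using the identity $U^{\perp}=(X\setminus U)\cup\{\omega\}$ noted in part 2.b of the proof of Lemma \ref{space to lattice} applied to $U=h(a)$, together with $h$ being a lattice homomorphism and the fact that $\mathfrak{A}$ is orthocomplemented, one identifies $h(a)^{\perp}$ with $h(-a)$; alternatively invoke that both $h(-a)$ and $h(a)^{\perp}$ are the complementary $\perp$-stable clopen set to $h(a)$ and such a complement is unique in the ortholattice $\mathscr{A}_0(\mathscr{S}_0(\mathfrak{A}))$.

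Next, injectivity: suppose $a\neq b$ in $\mathfrak{A}$; without loss of generality $a\not\leq b$. Then ${\uparrow}a$ is a filter containing $a$ but not $b$ (if $b\in{\uparrow}a$ then $a\leq b$), so ${\uparrow}a\in h(a)\setminus h(b)$, whence $h(a)\neq h(b)$. This is the standard separation-by-principal-filters argument and needs no prime filter existence theorem because we allow all filters. Combined with the surjectivity onto $\mathcal{CO}(\mathscr{S}_0(\mathfrak{A}))^{\dagger}$ already established and the homomorphism property, $h$ is the desired isomorphism $\mathfrak{A}\xrightarrow{\ \sim\ }\mathscr{A}_0(\mathscr{S}_0(\mathfrak{A}))$.

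The main obstacle is the orthocomplement clause $h(-a)=h(a)^{\perp}$, since the two sides are defined in quite different ways — one via the unary operation of $\mathfrak{A}$, the other via the relation $\perp_A$ on filters — and getting the reverse inclusion honestly requires either the uniqueness-of-orthocomplement trick in $\mathscr{A}_0(\mathscr{S}_0(\mathfrak{A}))$ or a direct filter-theoretic argument that any filter $y$ with $y\perp_A x$ for all $x\in h(a)$ must contain $-a$ (for instance by taking $x={\uparrow}a$ and noting $-a$ is the "canonical witness"). Everything else — surjectivity, injectivity, preservation of $\cap$ and $0$ — is either already in the excerpt or a one-line filter computation.
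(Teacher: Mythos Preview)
Your overall strategy coincides with the paper's: use $h$ from Definition 3.4, check it is a homomorphism, get injectivity from principal filters, and get surjectivity from the identification $\{h(a):a\in A\}=\mathcal{CO}(\mathscr{S}_0(\mathfrak{A}))^{\dagger}$ established in Lemma \ref{lattice to space}. The meet and zero clauses and the injectivity argument are fine and match the paper.

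The gap is in your handling of the reverse inclusion $h(a)^{\perp}\subseteq h(-a)$. Your two primary suggestions both fail. The identity $U^{\perp}=(X\setminus U)\cup\{\omega\}$ that you cite from the proof of Lemma \ref{space to lattice} is simply false in a general orthomodular space (a point $x\notin U$ need not be orthogonal to every element of $U$), so you cannot lean on it. And your ``uniqueness of complement'' alternative does not work either: in an ortholattice, lattice complements are \emph{not} unique (that is a Boolean phenomenon), so knowing that $h(-a)$ and $h(a)^{\perp}$ are both complements of $h(a)$ in $\mathscr{A}_0(\mathscr{S}_0(\mathfrak{A}))$ would not force them to coincide; moreover, even verifying that $h(-a)$ is a complement in the join direction already requires the operation ${}^{\perp}$, so the argument is circular. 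What does work is exactly the direct filter-theoretic route you mention only parenthetically at the end: given $y\in h(a)^{\perp}$, instantiate with $x={\uparrow}a\in h(a)$ to obtain some $b$ with $b\in y$ and $-b\in{\uparrow}a$; then $a\leq -b$, hence $b\leq -a$, and upward closure of $y$ gives $-a\in y$. This is precisely what the paper's terse line ``by the definition of $\perp_A$'' is encoding, so you should promote that argument to the main text and drop the other two.
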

\begin{proof}
We prove that the canonical map $h\colon \mathfrak{A}\to\wp(\mathfrak{F}(\mathfrak{A}))$ as defined in Definition 3.4 exhibits the desired isomorphism from $\mathfrak{A}$ to $\mathscr{A}_0(\mathscr{S}_0(\mathfrak{A}))$. To see that $h$ is a homomorphism, there are three cases.

We first prove that $h(a\cdot b)=h(a)\cap h(b)$: 
\begin{align*}
    h(a\cdot b)&=\{x\in\mathfrak{F}(\mathfrak{A}):a\cdot b\in x\}\tag{by the definition of $h$} 
    \\&=\{x\in\mathfrak{F}(\mathfrak{A}):a\in x\wedge b\in x\}\tag{since $x$ is a filter}
    \\&=\{x\in\mathfrak{F}(\mathfrak{A}):a\in x\}\cap\{x\in\mathfrak{F}(\mathfrak{A}):b\in x\}\tag{by the definition of $\cap$}
    \\&=h(a)\cap h(b)\tag{by the definition of $h$}
\end{align*}
We now demonstrate that $h(-a)=h(a)^{\perp}$: 
\begin{align*}
    h(-a)&=\{x\in\mathfrak{F}(\mathfrak{A}):-a\in x\}\tag{by the definition of $h$}
    \\&=\{x\in \mathfrak{F}(\mathfrak{A}):\forall y\in h(a).\hspace{.05cm}x\perp_Ay\}\tag{by the definition of $\perp_A$}
    \\&=h(a)^{\perp}\tag{by the definition of $^{\perp}$}
\end{align*}
Finally, we show that $h(0)=\{\omega\}$:
 \begin{align*}
     h(0)&=\{x\in\mathfrak{F}(\mathfrak{A}):0\in x\}\tag{by the definition of $h$}
     \\&=\{x\in\mathfrak{F}(\mathfrak{A}):x=\omega\}\tag{by the definition of $\omega$}
     \\&=\{\omega\}\tag{since $\omega$ is unique}
 \end{align*}

 It remains to prove that $h$ is a bijection. To see that $h$ is an injection, assume $a\not=b$. Without loss of generality, we have $b\not\leq a$. Hence $a\not\in\uparrow\hspace{-.1cm}b$ but clearly $b\in\uparrow\hspace{-.1cm}b$ so $h(a)\not=h(b)$. Recall that from Lemma 3.5, it was shown that each $U\in\mathcal{CO}(\mathfrak{F}(\mathfrak{A}))^{\dagger}$ is of the form $h(a)$ for some $a\in A$. This implies that $h$ is surjective. 

Since $h$ is a bijective homomorphism from $\mathfrak{A}$ to $\mathscr{A}_0(\mathscr{S}_0(\mathfrak{A}))$, $h$ is an isomorphism.   
\end{proof}

The following result is well-known and will be used in the proof of Theorem 3.11. 
\begin{lemma}
Let $\mathcal{X}$ be a compact Hausdorff space and let $\mathcal{X}'$ be any Hausdorff space. If $f\colon\mathcal{X}\to\mathcal{X}'$ is a continuous bijection, then $f$ is a homeomorphism. 
\end{lemma}
\begin{proof}
See Davey and Priestley \cite[Lemma A.7, p. 277]{davey}. 
\end{proof}
Recall that if $\mathcal{X}=\langle X; R_X,Q_X,\mathcal{T}_X\rangle$ and $\mathcal{Y}=\langle Y; R_Y,Q_Y, \mathcal{T}_Y\rangle$ are relational topological spaces in which $R_X,Q_X\subseteq X\times X$ and $R_Y,Q_Y\subseteq Y\times Y$, then a continuous function $f\colon \mathcal{X}\to \mathcal{Y}$ is a \emph{relational homeomorphism} if $f$ is a homeomorphism and a relational isomorphism i.e., for all $x,y\in X$, we have $xR_Xy\Leftrightarrow f(x)R_Yf(y)$ and $xQ_Xy\Leftrightarrow f(x)Q_Yf(y)$. 
  
\begin{theorem}[Algebraic realization of orthomodular spaces]\label{realization}
For an arbitrary orthomodular space $\mathcal{X}=\langle X;\preceq,\perp, \Omega,\mathcal{T}_X\rangle$, there exists a relational homeomorphism $\mathcal{X}\to\mathscr{S}_0(\mathscr{A}_0(\mathcal{X}))$. 
\end{theorem}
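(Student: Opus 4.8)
The plan is to define the natural candidate map and show it is a relational homeomorphism, mirroring the classical Stone-type argument. Given an orthomodular space $\mathcal{X}=\langle X;\preceq,\perp,\Omega,\mathcal{T}_X\rangle$, recall that $\mathscr{A}_0(\mathcal{X})=\langle\mathcal{CO}(\mathcal{X})^{\dagger};\cap,{}^{\perp},\{\omega\}\rangle$ is an orthomodular lattice by Lemma~\ref{space to lattice}, and its dual space $\mathscr{S}_0(\mathscr{A}_0(\mathcal{X}))$ has carrier the set of all filters of $\mathscr{A}_0(\mathcal{X})$. For each $x\in X$, define $\varepsilon(x):=\{U\in\mathcal{CO}(\mathcal{X})^{\dagger}:x\in U\}$. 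I would first check that $\varepsilon(x)$ is indeed a filter of $\mathscr{A}_0(\mathcal{X})$: it is nonempty since $X=\{\omega\}^{\perp}\in\mathcal{CO}(\mathcal{X})^{\dagger}$ (or via condition 3) and $x\in X$; it is upward closed because $U\subseteq V$ and $x\in U$ forces $x\in V$; and it is closed under $\cap$ trivially. So $\varepsilon\colon X\to\mathfrak{F}(\mathscr{A}_0(\mathcal{X}))$ is well-defined.

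Next I would verify the two relational conditions and injectivity, all of which come straight from the separation axioms (conditions 5 and 6) of Definition~\ref{oms}. For injectivity and order-reflection: if $x\not\preceq y$, condition 5 gives $U\in\mathcal{CO}(\mathcal{X})^{\dagger}$ with $x\in U$, $y\notin U$, so $U\in\varepsilon(x)\setminus\varepsilon(y)$, hence $\varepsilon(x)\not\subseteq\varepsilon(y)$; conversely $x\preceq y$ makes every $U$ (a principal upset by condition 4) containing $x$ contain $y$, so $\varepsilon(x)\subseteq\varepsilon(y)$. Thus $\varepsilon(x)\subseteq\varepsilon(y)\Leftrightarrow x\preceq y$, which also yields injectivity since $\preceq$ is a partial order. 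For the orthogonality relation: I must show $x\perp y$ iff $\varepsilon(x)\perp_{\mathscr{A}_0(\mathcal{X})}\varepsilon(y)$, i.e.\ iff there is $U\in\mathcal{CO}(\mathcal{X})^{\dagger}$ with $U\in\varepsilon(x)$ and $U^{\perp}\in\varepsilon(y)$. The forward direction is exactly condition 6. For the converse, if $x\in U$ and $y\in U^{\perp}$ then $y\perp x$ by definition of ${}^{\perp}$, hence $x\perp y$ by symmetry (condition 8 of orthomodular frames). Finally, $\Omega$ must be matched with the distinguished subset $\mathfrak{P}(\mathscr{A}_0(\mathcal{X}))$ of principal filters: using condition 4, for $z\in\Omega$ one shows $\varepsilon(z)$ is the principal filter generated by the smallest $U\in\mathcal{CO}(\mathcal{X})^{\dagger}$ containing $z$ (namely $\bigcap\varepsilon(z)$, which lies in $\mathcal{CO}(\mathcal{X})^{\dagger}$ by compactness plus closure under $\cap$), and conversely every principal filter of $\mathscr{A}_0(\mathcal{X})$ arises this way; this identifies $\varepsilon[\Omega]=\mathfrak{P}(\mathscr{A}_0(\mathcal{X}))$.

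For surjectivity I would argue as in the Stone/Goldblatt representation: given a filter $F$ of $\mathscr{A}_0(\mathcal{X})$, the family $\{U:U\in F\}\cup\{X\setminus U:U\notin F\}$ of closed sets has the finite intersection property (if some finite subfamily were empty, finitely many $U_i\in F$ and the complement of some $V\notin F$ would cover, forcing $U_1\cap\cdots\cap U_n\subseteq V$ and hence $V\in F$, a contradiction), so by compactness there is $x$ in the intersection, and then $\varepsilon(x)=F$. Then $\varepsilon$ is a continuous bijection: continuity holds because the preimage of a subbasic set $h(U)=\{F:U\in F\}$ is $\{x:U\in\varepsilon(x)\}=U$, which is open (indeed clopen), and likewise for complements. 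Since $\mathscr{S}_0(\mathscr{A}_0(\mathcal{X}))$ is a Stone space hence compact Hausdorff (Corollary after Lemma~\ref{lattice to space}), and $\mathcal{X}$ is compact and Hausdorff (its points are separated by clopen $\perp$-stable sets via condition 5, so it is totally disconnected, hence Hausdorff), Lemma~\ref{space to lattice}'s preceding lemma on continuous bijections between compact Hausdorff spaces makes $\varepsilon$ a homeomorphism, and combined with the relational isomorphism established above, a relational homeomorphism.

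The main obstacle I anticipate is the surjectivity/compactness step and, relatedly, the careful handling of the improper filter $\omega$ throughout: one must check at each stage that $\{\omega\}$, $X$, and the distinguished set $\Omega$ behave correctly under $\varepsilon$ — in particular that $\varepsilon(\omega)$ is the improper filter of $\mathscr{A}_0(\mathcal{X})$ (since $\{\omega\}\in\mathcal{CO}(\mathcal{X})^{\dagger}$ is its zero and $\omega\in\{\omega\}$), and that the Priestley-style separation in condition 5 is genuinely strong enough to separate all distinct points, not merely $\preceq$-incomparable ones, which is what secures both Hausdorffness and injectivity.
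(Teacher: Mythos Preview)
Your approach is essentially the paper's: the same map $x\mapsto\{U\in\mathcal{CO}(\mathcal{X})^{\dagger}:x\in U\}$, the same use of conditions~5 and~6 for injectivity and the relational clauses, the same continuity computation $f^{-1}[h(U)]=U$, and the same appeal to the compact-Hausdorff bijection lemma.

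Two remarks. First, your surjectivity step differs from the paper, which simply cites Iturrioz for proper filters and checks $\varepsilon(\omega)$ is the improper filter. Your finite-intersection-property argument is more self-contained, but as written it has a gap: you only treat a single $V\notin F$, whereas a finite subfamily may involve several $V_1,\dots,V_m\notin F$, yielding $U_1\cap\cdots\cap U_n\subseteq V_1\cup\cdots\cup V_m$. From this alone a mere filter need not contain any $V_j$. The fix is to invoke condition~4 of Definition~\ref{oms}: writing $W:=U_1\cap\cdots\cap U_n={\uparrow}w$ and each $V_j={\uparrow}v_j$, the generator $w$ lies in some $V_j$, hence $W\subseteq V_j$ and $V_j\in F$, a contradiction. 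With that amendment your argument is correct and arguably cleaner than a bare citation.

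Second, your paragraph matching $\Omega$ with $\mathfrak{P}(\mathscr{A}_0(\mathcal{X}))$ is not part of the theorem as the paper states it: the definition of \emph{relational homeomorphism} given just before Theorem~\ref{realization} refers only to the two binary relations $\preceq$ and $\perp$, not to the distinguished subset. Your sketch there (that $\bigcap\varepsilon(z)\in\mathcal{CO}(\mathcal{X})^{\dagger}$ ``by compactness plus closure under $\cap$'') is not obviously justified by the axioms---nothing in Definition~\ref{oms} guarantees that ${\uparrow}z\in\mathcal{CO}(\mathcal{X})^{\dagger}$ for every $z\in\Omega$---so this extra claim would need a separate argument if you want it, but it is not needed for the theorem.
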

\begin{proof}
We demonstrate that the function $f\colon \mathcal{X}\to\mathfrak{F}(\mathcal{CO}(\mathcal{X})^{\dagger})$ defined by:  \[f(x)=\{U\in\mathcal{CO}(\mathcal{X})^{\dagger}:x\in U\}\] exhibits the desired relational homeomorphism from $\mathcal{X}$ to $\mathscr{S}_0(\mathscr{A}_0(\mathcal{X}))$.

Clearly $f$ is a total function in virtue of its definition via set abstraction, i.e., for any $x\in X$, the value of $f(x)$ is a set. We now check that $f(x)$ is a filter. Let $U,V\in\mathcal{CO}(\mathcal{X})^{\dagger}$ and assume that $U\subseteq V$. If $U\in f(x)$, then $x\in U$ by the definition of $f$. Then $x\in V$ since $U\subseteq V$ by hypothesis, and hence $V\in f(x)$, by the definition of $f$. Moreover, if $U,V\in f(x)$, then $x\in U$ and $x\in V$ by the definition of $f$ and hence $x\in U\cap V$. By hypothesis, $U$ and $V$ are clopen $\perp$-stable sets and hence by condition 3 of Definition 3.2, $U\cap V$ is also a clopen $\perp$-stable set, and thus $U\cap V\in f(x)$ by the definition of $f$. Therefore, we can conclude that $f(x)$ is a filter.

     For injectivity, let $x,y\in X$ and assume $x\not=y$. If $x\not\preceq y$, then by condition 5 of orthomodular spaces, there exists $U\in\mathcal{CO}(\mathcal{X})^{\dagger}$ such that $x\in U$ and $y\not\in U$. By the definition of $f$, we have $U\in f(x)$ but $U\not\in f(y)$ and hence $f(x)\not=f(y)$. Similarly, if $y\not\preceq x$, then there exists $U\in\mathcal{CO}(\mathcal{X})^{\dagger}$ such that $y\in U$ but $x\not\in U$ and hence $f(x)\not= f(y)$.

     To see that $f$ is a surjective function, let $z\in\mathfrak{F}(\mathcal{CO}(\mathcal{X})^{\dagger})$ be the improper filter in $\mathcal{CO}(\mathcal{X})^{\dagger}$ so that $z=X$. Since each $U\in\mathcal{CO}(\mathcal{X})^{\dagger}$ is a principal upset and $\omega$ is the top element in $\langle X;\preceq\rangle$, we calculate:
     \[f(\omega)=\{U\in\mathcal{CO}(\mathcal{X})^{\dagger}:\omega\in U\}=X=z.\] The proof that $z=f(x)$ for some $x\in X$ where $x\not=\omega$ whenever $z\in\mathfrak{F}(\mathcal{CO}(\mathcal{X})^{\dagger})$ is a proper filter runs similarly to \cite[Theorem 5, p. 520--522]{iturrioz}. Hence $f$ is a surjection and since we have already seen that $f$ is injective, we have that $f$ is a bijection.

     To see that $f$ is a continuous function, we first show that $f^{-1}[h(U)]\in\mathcal{O}(\mathcal{X})$ for each open set $h(U)\in\mathcal{O}(\mathscr{S}_0(\mathscr{A}_0(\mathcal{X})))$. The result is achieved by observing the following calculation:
\begin{align*}
    f^{-1}[h(U)]&=\{x\in X:f(x)\in h(U)\}\tag{by the definition of $f^{-1}$}
    \\&=\{x\in X:U\in f(x)\}\tag{by the definition of $h$}
    \\&=\{x\in X:x\in U\}\tag{by the definition of $f$}
    \\&=U
\end{align*}
Similarly, we now verify that $f^{-1}[\mathfrak{F}(\mathcal{CO}(\mathcal{X})^{\dagger})\setminus h(U)]\in\mathcal{O}(\mathcal{X})$ for each open set of the form  $\mathfrak{F}(\mathcal{CO}(\mathcal{X})^{\dagger})\setminus h(U)\in\mathscr{S}_0(\mathscr{A}_0(\mathcal{X}))$: 
\begin{align*}
    f^{-1}[\mathfrak{F}(\mathcal{CO}(\mathcal{X})^{\dagger})\setminus h(U)]&=\{x\in X:f(x)\in\mathfrak{F}(\mathcal{CO}(\mathcal{X})^{\dagger})\setminus h(U)\}\tag{by the definition of $f^{-1}$}
    \\&=\{x\in X:f(x)\not\in h(U)\}\tag{by the definition of $\setminus$}
    \\&=\{x\in X:U\not\in f(x)\}\tag{by the definition of $h$}
    \\&=\{x\in X:x\not\in U\}\tag{by the definition of $f$}
    \\&=X\setminus U\tag{by the definition of $\setminus$}
\end{align*}

 Having already noted that every orthomodular space is a totally disconnected space, it is easy to see that every orthomodular space is also a Hausdorff space. Then, since $f$ is a continuous bijection from a compact Hausdorff space to a Hausdorff space, it immediately follows from Lemma 3.10 that $f$ is a homeomorphism.

 To see that in addition, $f$ is a relational homeomorphism, we first show that $f$ is a relational isomorphism with respect to the orthomodular frame reducts $\langle X;\perp_X\rangle$ and $\langle \mathfrak{F}(\mathcal{CO}(\mathcal{X})^{\dagger});\perp_{\mathfrak{F}(\mathcal{CO}(\mathcal{X})^{\dagger})}\rangle$. The result is achieved by observing that for arbitrary points $x,y\in X$, we have:
\begin{align*}
    x\perp_X y &\Leftrightarrow \exists U\in\mathcal{CO}(\mathcal{X})^{\dagger}[x\in U\wedge y\in U^{\perp}]\tag{by the definition of $\perp_X$}
    \\&\Leftrightarrow \exists U\in\mathcal{CO}(\mathcal{X})^{\dagger}[U\in f(x)\wedge U^{\perp}\in f(y)]\tag{by the definition of $f$}
    \\&\Leftrightarrow f(x)\perp_{\mathfrak{F}(\mathcal{CO}(\mathcal{X})^{\dagger})}f(y)\tag{by the definition of $\perp_{\mathfrak{F}(\mathcal{CO}(\mathcal{X})^{\dagger})}$}
\end{align*}

 Lastly, we verify that $f$ is a relational isomorphism with respect to the partial order reducts $\langle X,\preceq\rangle$ and $\langle\mathfrak{F}(\mathcal{CO}(\mathcal{X})^{\dagger});\subseteq\rangle$. The result is almost immediate by noting:
 \begin{align*}
     x\not\preceq_Xy&\Leftrightarrow\exists U\in\mathcal{CO}(\mathcal{X})^{\dagger}[x\in U\wedge y\not\in U]\tag{by definition 3.2}
     \\&\Leftrightarrow \exists U\in\mathcal{CO}(\mathcal{X})^{\dagger}[U\in f(x)\wedge U\not\in f(y)]\tag{by the definition of $f$}
     \\&\Leftrightarrow f(x)\not\subseteq f(y)\tag{by the definition of $\subseteq$}
 \end{align*}
 
Hence $f$ is a relational homeomorphism from $\mathcal{X}$ to $\mathscr{S}_0(\mathscr{A}_0(\mathcal{X}))$ for any orthomodular space $\mathcal{X}$.
\end{proof}

\section{Duality Theory}

Within thi section, it is assumed that the reader is familiar with basic notions in category theory such as contravariant functor and dual equivalence of categories. We refer to Awodey in \cite{awo} for detailed treatment of basic category theory.

\begin{definition}
The category $\mathbb{OML}$ has as objects orthomodular lattices, and has as morphisms \emph{orthomodular lattice homomorphisms}, i.e., homomorphisms $\phi\colon \mathfrak{A}\to \mathfrak{A}'$ which preserve the orthomodular lattice operations: 

\begin{enumerate}
    \item $\phi(a\cdot b)=\phi(a)\cdot\phi(b)$
    \item $\phi(-a)=-\phi(a)$
    \item $\phi(0)=0'$
\end{enumerate}

\end{definition}
The following class of continuous frame morphisms were introduced in \cite{bimbo} and will accompany the objects in the category of orthomodular spaces. 
\begin{definition}\label{p-morphism}
The category $\mathbb{OMS}$ has as objects orthomodular spaces and has as morphisms \emph{continuous weak p-morphisms}, i.e., continuous functions $\psi\colon\mathcal{X}\to\mathcal{X}'$ satisfying the following conditions: 
 \begin{enumerate}
    \item $x\not\perp_Xy\Rightarrow \psi(x)\not\perp_{X'}\psi(y)$
    \item $z'\not\perp_{X'}\psi (y)\Rightarrow\exists x(x\not\perp_Xy\wedge z'\preceq_{X'}\psi(x))$
\end{enumerate}
\end{definition}

The frame conditions within Definition 4.2 are meant to resemble those of a $p$-morphism in the sense of modal logic with respect to $\not\perp$, and hence we do not need to include conditions such as boundedness or monotonicity with respect to $\preceq$ (see for instance Blackburn, de Rijke, and Venema in \cite[Chapter 3]{blackburn} for more details on $p$-morphisms). Observe that if we replace $\preceq_{X'}$ by $=$ in the right-hand conjunct of the consequent of condition 2, then the conditions on the frame correspond exactly to that of a $p$-morphism. Notice that over $X\setminus\{\omega\}$ and $X'\setminus\{\omega'\}$, the resulting $p$-morphism is that of a $p$-morphism on the modal $B$-frames of reflexive symmetric modal logic. See \cite{goldblatt2} for more details on the normal modal logic $B$.

\begin{lemma}[The contravariant functor $\mathscr{S}_F$]\label{homomorphism to continuous map}
Let $\mathfrak{A}$ and $\mathfrak{A}'$ be orthomodular lattices and let $\phi\colon \mathfrak{A}\to\mathfrak{A}'$ an orthomodular lattice homomorphism. Then the map $\mathscr{S}_1(\phi)\colon\mathscr{S}_0(\mathfrak{A}')\to\mathscr{S}_0(\mathfrak{A})$ defined by $\mathscr{S}_1(\phi):=\phi^{-1}$ is a continuous weak $p$-morphism. Moreover, $\mathscr{S}_F:=\langle\mathscr{S}_0,\mathscr{S}_1\rangle\colon\mathbb{OML}\to\mathbb{OMS}$ is a contravariant functor.  
\end{lemma}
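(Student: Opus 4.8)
The plan is to prove the statement in two stages: first that $\mathscr{S}_1(\phi) = \phi^{-1}$ is a well-defined continuous weak $p$-morphism, and then that $\mathscr{S}_F = \langle \mathscr{S}_0, \mathscr{S}_1 \rangle$ respects identities and composition, hence is a contravariant functor.

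\medskip
\noindent\textbf{Step 1: $\phi^{-1}$ maps filters to filters.} First I would check that for a filter $x' \in \mathfrak{F}(\mathfrak{A}')$, the preimage $\phi^{-1}[x'] = \{a \in A : \phi(a) \in x'\}$ is a filter of $\mathfrak{A}$. Upward closure follows from monotonicity of $\phi$ (which follows from $\phi(a\cdot b)=\phi(a)\cdot\phi(b)$); closure under $\cdot$ follows from $\phi(a \cdot b) = \phi(a)\cdot\phi(b)$; and non-emptiness holds since $\phi(0) = 0' \in x'$ when $x'$ is improper, or more simply since $\phi$ of the top element is the top element of $\mathfrak{A}'$, which lies in every filter. (One must allow improper filters here, which the paper explicitly does.) So $\mathscr{S}_1(\phi)$ is a well-defined function $\mathfrak{F}(\mathfrak{A}') \to \mathfrak{F}(\mathfrak{A})$.

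\medskip
\noindent\textbf{Step 2: continuity.} Continuity is checked on the subbasis of $\mathscr{S}_0(\mathfrak{A})$. For $a \in A$, compute $(\phi^{-1})^{-1}[h_A(a)] = \{x' \in \mathfrak{F}(\mathfrak{A}') : a \in \phi^{-1}[x']\} = \{x' : \phi(a) \in x'\} = h_{A'}(\phi(a))$, which is a subbasic open set of $\mathscr{S}_0(\mathfrak{A}')$; similarly the preimage of $\mathfrak{F}(\mathfrak{A})\setminus h_A(a)$ is $\mathfrak{F}(\mathfrak{A}')\setminus h_{A'}(\phi(a))$. Hence $\mathscr{S}_1(\phi)$ is continuous.

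\medskip
\noindent\textbf{Step 3: the weak $p$-morphism conditions.} For condition 1 of Definition \ref{p-morphism}, suppose $x' \not\perp_{A'} y'$; I must show $\phi^{-1}[x'] \not\perp_A \phi^{-1}[y']$. By contraposition, if $\phi^{-1}[x'] \perp_A \phi^{-1}[y']$ then there is $a\in A$ with $a \in \phi^{-1}[x']$ and $-a \in \phi^{-1}[y']$, i.e.\ $\phi(a) \in x'$ and $\phi(-a) = -\phi(a) \in y'$ (using $\phi(-a)=-\phi(a)$), so $x' \perp_{A'} y'$. For condition 2, suppose $z' \not\perp_{A'} \phi^{-1}[y']$; I need $x' \in \mathfrak{F}(\mathfrak{A}')$ with $x' \not\perp_{A'} y'$ and $z' \subseteq \phi^{-1}[x']$. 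The natural candidate is $x' := \phi[z']^{\uparrow}$, the filter of $\mathfrak{A}'$ generated by the image $\phi[z']$ (i.e.\ the upward closure of $\{\phi(b)\cdot\dots : b\in z'\}$ under $\cdot$). Then $z' \subseteq \phi^{-1}[x']$ is immediate. The work is verifying $x' \not\perp_{A'} y'$: if $x' \perp_{A'} y'$, there would be $c \in x'$ with $-c \in y'$, where $c \geq \phi(b_1)\cdots\phi(b_n) = \phi(b_1\cdots b_n)$ for some $b_1,\dots,b_n \in z'$; setting $b := b_1\cdots b_n \in z'$ gives $\phi(b) \leq c$, so $-c \leq -\phi(b) = \phi(-b)$, and since $y'$ is upward closed $\phi(-b) \in y'$; but $b \in z'$ and $\phi(-b) \in y'$ witness $z' \perp_{A'} \phi^{-1}[y']$ — wait, more carefully, $-b \in \phi^{-1}[y']$, and $b \in z'$, so $z' \perp_{A'} \phi^{-1}[y']$, contradicting the hypothesis. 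This relational argument is the step I expect to be the main obstacle, since one has to thread the definition of $\perp_A$ through the generated filter carefully and use both the order-reversal of $-$ and that $\phi$ preserves $-$ and $\cdot$. (Alternatively, one can use the already-established duality: $\phi$ corresponds via Theorem \ref{representation} to a continuous map, but since $\mathscr{S}_1$ is exactly what we are trying to establish, the direct argument above is cleaner.)

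\medskip
\noindent\textbf{Step 4: functoriality.} Finally, $\mathscr{S}_0$ sends each orthomodular lattice to an orthomodular space by Lemma \ref{lattice to space}, and $\mathscr{S}_1$ sends each homomorphism to a continuous weak $p$-morphism by Steps 1--3. For contravariance: $\mathscr{S}_1(\mathrm{id}_{\mathfrak{A}}) = \mathrm{id}_{\mathfrak{A}}^{-1} = \mathrm{id}_{\mathfrak{F}(\mathfrak{A})} = \mathrm{id}_{\mathscr{S}_0(\mathfrak{A})}$, and for $\phi\colon\mathfrak{A}\to\mathfrak{A}'$, $\phi'\colon\mathfrak{A}'\to\mathfrak{A}''$ we have $\mathscr{S}_1(\phi'\circ\phi) = (\phi'\circ\phi)^{-1} = \phi^{-1}\circ(\phi')^{-1} = \mathscr{S}_1(\phi)\circ\mathscr{S}_1(\phi')$, which is the required order reversal. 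Hence $\mathscr{S}_F$ is a contravariant functor from $\mathbb{OML}$ to $\mathbb{OMS}$.
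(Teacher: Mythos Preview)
Your proof is correct and follows essentially the same approach as the paper: continuity via the subbasic identity $(\phi^{-1})^{-1}[h(a)]=h(\phi(a))$, condition~1 by contraposition, and functoriality from $(\phi_2\circ\phi_1)^{-1}=\phi_1^{-1}\circ\phi_2^{-1}$. Two minor points: in Step~3 you write $\perp_{A'}$ in a few places where $\perp_A$ is meant (both $z'$ and $\phi^{-1}[y']$ are filters of $\mathfrak{A}$); and your treatment of condition~2 --- explicitly taking $x'$ to be the filter generated by $\phi[z']$ and deriving a contradiction via $-c\leq\phi(-b)$ --- is actually more careful than the paper's, which appeals only to ``the monotonicity of $^{-1}$'' without naming the witness.
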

\begin{proof}
Given our hypothesis, it suffices to show that $\phi^{-1}$ is a continuous weak $p$-morphism from $\mathscr{S}_0(\mathfrak{A}')$ to $\mathscr{S}_0(\mathfrak{A})$ whenever $\phi\colon\mathfrak{A}\to\mathfrak{A}'$ is an orthomodular lattice homomorphism.

    We show that $\phi^{-1}$ is a continuous function by proving $(\phi^{-1})^{-1}[U]$ is open in $\mathscr{S}_0(\mathfrak{A}')$ whenever $U$ is a subbasic open set in $\mathscr{S}_0(\mathfrak{A})$. There are two cases: 
    
    The calculation for the case when $U$ is of the form $h(a)$ for some $a\in A$ runs as follows: 
\begin{align*}
    x\in(\phi^{-1})^{-1}[h(a)]&\Leftrightarrow \phi^{-1}[x]\in h(a)\tag{by the definition of $\phi^{-1}$}
    \\&\Leftrightarrow a\in\phi^{-1}[x]\tag{by the definition of $h$}
    \\&\Leftrightarrow \phi(a)\in x\tag{by the definition of $\phi^{-1}$}
    \\&\Leftrightarrow x\in h(\phi(a))\tag{by the definition of $h$}
\end{align*}
The case for when $U$ is of the form $\mathfrak{F}(\mathfrak{A})\setminus h(a)$ for some $a\in A$ runs similarly: 
 \begin{align*}
     x\in(\phi^{-1})^{-1}[\mathfrak{F}(\mathfrak{A})\setminus h(a)]&\Leftrightarrow\phi^{-1}[x]\in\mathfrak{F}(\mathfrak{A})\setminus h(a)\tag{by the definition of $\phi^{-1}$}
     \\&\Leftrightarrow\phi^{-1}[x]\not\in h(a)\tag{by the definition of $\setminus$}
    \\&\Leftrightarrow a\not\in\phi^{-1}[x]\tag{by the definition of $h$}
    \\&\Leftrightarrow\phi(a)\not\in x\tag{by the definition of $\phi^{-1}$}
    \\&\Leftrightarrow x\not\in h(\phi(a))\tag{by the definition of $h$}
    \\&\Leftrightarrow x\in\mathfrak{F}(\mathfrak{A})\setminus h(\phi(a))\tag{by the definition of $\setminus$}
 \end{align*}

     To see that $\phi^{-1}$ is in addition a weak $p$-morphism, first notice that conditions 1 and 2 of Definition \ref{p-morphism} are trivially satisfied when either argument place of the antecedent is $\omega$ since $\omega\perp_A x$ for any $x\in\mathscr{S}_0(\mathfrak{A})$ and $\omega\perp_{A'} y$ for any $y\in\mathscr{S}_0(\mathfrak{A}')$. Now let $x,y\in\mathscr{S}_0(\mathfrak{A}')$ such that $x,y\not=\omega$. For condition 1 of Definition \ref{p-morphism}, assume for the sake of contraposition that $\phi^{-1}[x]\perp_{A}\phi^{-1}[y]$ so that there exists $a\in A$ such that $a\in \phi^{-1}[x]$ and $-a\in\phi^{-1}[y]$. Therefore $\phi(a)\in x$ and $\phi(-a)=-\phi(a)\in y$ and hence $x\perp_{A'}y$.

 For condition 2 of Definition \ref{p-morphism}, assume that $z\not\perp_{A}\phi^{-1}[y]$. Since $y\not=\omega$ by hypothesis, it is immediate that there exists $x\in\mathscr{S}_0(\mathfrak{A}')$ such that $x\not\perp_{A'}y$. The monotonicity of $^{-1}$ ensures that the right-hand conjunct in the consequent of condition 2 is satisfied.

 It remains to verify that $\mathscr{S}_F$ is indeed a contravariant functor. First recall that Lemma 3.5 showed that if $\mathfrak{A}$ is an orthomodular lattice, then $\mathscr{S}_0(\mathfrak{A})$ is an orthomodular space. This establishes that $\mathscr{S}_0$ is a functor on objects. We now check that $\mathscr{S}_1$ is a functor on morphisms. Hence, let $\mathfrak{A}_1$, $\mathfrak{A}_2$, and $\mathfrak{A}_3$ be orthomodular lattices, let $\psi_1\colon\mathfrak{A}_1\to\mathfrak{A}_2$ and $\phi_2\colon\mathfrak{A}_2\to\mathfrak{A}_3$ be homomorphisms, and define their composition $\phi_2\circ\phi_1\colon\mathfrak{A}_1\to\mathfrak{A}_3$ by $\phi_2\circ\phi_1(a):=\phi_2(\phi_1(a))$ for all $a\in A_1$. The first part of this proof established that if $\phi\colon\mathfrak{A}\to\mathfrak{A}'$ is a homomorphism of orthomodular lattices, then $\mathscr{S}_1(\phi)\colon\mathscr{S}_0(\mathfrak{A}')\to\mathscr{S}_0(\mathfrak{A})$ is a continuous weak $p$-morphism between orthomodular spaces. This, together with the fact that continuous functions and $p$-morphisms (and hence weak $p$-morphisms) are closed under composition, implies that $\mathscr{S}_1(\phi)\colon\mathscr{S}_0(\mathfrak{A}_3)\to\mathscr{S}_0(\mathfrak{A}_1)$ is a continuous weak $p$-morphism whenever $\phi_2\circ\phi_1\colon\mathfrak{A}_1\to\mathfrak{A}_3$ is a homomorphism. Moreover, we have $\mathscr{S}_1(\phi_2\circ\phi_1)=\mathscr{S}_1(\phi_1)\circ\mathscr{S}_0(\phi_2)$ since $(\phi_2\circ\phi_1)^{-1}=\phi_1^{-1}\circ\phi_2^{-1}$. Lastly, let $\textit{id}_{\mathfrak{A}}\colon\mathfrak{A}\to\mathfrak{A}$ be the trivial identity homomorphism on an orthomodular lattice $\mathfrak{A}$ defined by $\textit{id}_{\mathfrak{A}}(a)=a$ for all $a\in A$. Clearly we have $\mathscr{S}_1(\textit{id}_{\mathfrak{A}})=\textit{id}_{\mathscr{S}_0(\mathfrak{A})}$ since $\textit{id}_{\mathfrak{A}}^{-1}=\textit{id}_{\mathfrak{A}}$. This shows that $\mathscr{S}_F:=\langle\mathscr{S}_0,\mathscr{S}_1\rangle\colon\mathbb{OML}\to\mathbb{OMS}$ is a contravariant functor.
\end{proof}

\begin{lemma}[The contravariant functor $\mathscr{A}_F$]\label{continuous map to homomorphism}
Let $\mathcal{X}$ and $\mathcal{X}'$ be orthomodular spaces and $\psi\colon \mathcal{X}\to \mathcal{X}'$ a continuous weak $p$-morphism. Then the map $\mathscr{A}_1(\psi)\colon\mathscr{A}_0(\mathcal{X}')\to\mathscr{A}_0(\mathcal{X})$ defined by $\mathscr{A}_1(\psi):=\psi^{-1}$ is an orthomodular lattice homomorphism. Moreover, $\mathscr{A}_F:=\langle\mathscr{A}_0,\mathscr{A}_1\rangle\colon\mathbb{OMS}\to\mathbb{OML}$ is a contravariant functor. 
\end{lemma}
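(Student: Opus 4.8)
The plan is to mirror the structure of the proof of Lemma \ref{homomorphism to continuous map}, working on the other side of the duality. First I would verify that $\psi^{-1}$ is well defined as a map $\mathcal{CO}(\mathcal{X}')^{\dagger}\to\mathcal{CO}(\mathcal{X})^{\dagger}$, i.e., that the preimage under a continuous weak $p$-morphism of a clopen $\perp$-stable set is again clopen and $\perp$-stable. Clopenness of $\psi^{-1}[U]$ is immediate from continuity of $\psi$ (preimages of clopen sets are clopen). For $\perp$-stability, the argument should run through conditions 1 and 2 of Definition \ref{p-morphism}: using the contrapositive reading of condition 1 together with the characterization $U^{\perp}=(X\setminus U)\cup\{\omega\}$ noted in the proof of Lemma \ref{space to lattice}, one shows $\psi^{-1}[U^{\perp}]=(\psi^{-1}[U])^{\perp}$, and since $U=U^{\perp\perp}$ gives $\psi^{-1}[U]=\psi^{-1}[U^{\perp\perp}]=(\psi^{-1}[U])^{\perp\perp}$, stability follows. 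This computation $\psi^{-1}[U^{\perp}]=(\psi^{-1}[U])^{\perp}$ is the technical heart of the object-level part and simultaneously verifies that $\mathscr{A}_1(\psi)$ preserves $^{\perp}$.

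Next I would check the remaining homomorphism equations. Preservation of $\cap$ is purely set-theoretic: $\psi^{-1}[U\cap V]=\psi^{-1}[U]\cap\psi^{-1}[V]$ for any function, so this needs only a one-line calculation. Preservation of the bottom element amounts to showing $\psi^{-1}[\{\omega'\}]=\{\omega\}$; here one uses that $\psi$, being a weak $p$-morphism, sends $\omega$ to $\omega'$ (this should follow from condition 2 applied with the inconsistent state, or can be extracted from boundedness considerations), and conversely that no other point maps to $\omega'$ because $\omega'$ is the unique state orthogonal to everything (condition 7 of orthomodular frames) while a non-$\omega$ point $x$ has, by condition 5 of orthomodular spaces, some clopen $\perp$-stable set witnessing $x\not\perp x$, which is incompatible with $\psi(x)=\omega'$. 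Assembling these three facts shows $\mathscr{A}_1(\psi)=\psi^{-1}$ is an orthomodular lattice homomorphism from $\mathscr{A}_0(\mathcal{X}')$ to $\mathscr{A}_0(\mathcal{X})$.

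Finally, for functoriality: Lemma \ref{space to lattice} already establishes that $\mathscr{A}_0$ sends objects of $\mathbb{OMS}$ to objects of $\mathbb{OML}$, so it remains to check $\mathscr{A}_1$ on morphisms. Contravariance on composites is the identity $(\psi_2\circ\psi_1)^{-1}=\psi_1^{-1}\circ\psi_2^{-1}$ for preimages, and preservation of identities is $\textit{id}_{\mathcal{X}}^{-1}=\textit{id}_{\mathcal{X}}$; both are routine, exactly as in the proof of Lemma \ref{homomorphism to continuous map}. One should also note for completeness that the composite of two continuous weak $p$-morphisms is again a continuous weak $p$-morphism — continuity is closed under composition, and conditions 1 and 2 of Definition \ref{p-morphism} are preserved under composition by a short diagram chase — so that $\mathbb{OMS}$ is genuinely a category and $\mathscr{A}_1$ lands in its hom-sets. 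I expect the main obstacle to be the verification that $\psi^{-1}$ preserves $\perp$-stability, since this is where the weak $p$-morphism axioms must be used in an essential (rather than purely formal) way; everything else is either a direct consequence of continuity or a routine set-theoretic identity.
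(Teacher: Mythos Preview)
Your outline matches the paper's proof in structure and in the key technical step: the paper establishes $\psi^{-1}[U^{\perp}]=(\psi^{-1}[U])^{\perp}$ directly from conditions 1 and 2 of Definition~\ref{p-morphism} by contraposition (condition 1 for one inclusion, condition 2 together with the fact that clopen $\perp$-stable sets are upsets for the other), then handles $\cap$ set-theoretically, proves $\psi^{-1}[\{\omega'\}]=\{\omega\}$ using both $p$-morphism conditions, and closes with the standard $(\psi_2\circ\psi_1)^{-1}=\psi_1^{-1}\circ\psi_2^{-1}$ argument for functoriality.

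One caution: do not lean on the identity $U^{\perp}=(X\setminus U)\cup\{\omega\}$. That remark in the proof of Lemma~\ref{space to lattice} is not a general fact about clopen $\perp$-stable sets in an orthomodular space (already in the canonical filter space $\mathscr{S}_0(\mathfrak{A})$ one has $h(a)^{\perp}=h(-a)$, which is typically much smaller than $(\mathfrak{F}(\mathfrak{A})\setminus h(a))\cup\{\omega\}$), and the paper's own argument for the present lemma does not invoke it. If you try to push the computation through that characterization you will get a false proof; the honest route is exactly what you also indicate, namely the two $p$-morphism conditions applied to the definitional description of $U^{\perp}$. Similarly, the fact you need for the $\{\omega\}$ calculation---that $x\neq\omega$ implies $x\not\perp x$---is condition 5 of orthomodular \emph{frames} (Definition~\ref{omf}), not condition 5 of orthomodular spaces; no clopen witness is required there. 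With those two adjustments your plan is the paper's proof.
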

\begin{proof}
Given our hypothesis, we want to show that $\psi^{-1}$ is an orthomodular lattice homomorphism from $\mathscr{A}_0(\mathcal{X}')$ to $\mathscr{A}_0(\mathcal{X})$ whenever $\psi\colon \mathcal{X}\to \mathcal{X}'$ is a continuous weak $p$-morphism.

     To see that $\psi^{-1}$ is a homomorphism, we first calculate $\psi^{-1}[\{\omega\}]=\{\omega\}$. For the left-to-right inclusion, assume that $y\not\in\psi^{-1}[\{\omega\}]$ for the sake of contraposition. Then $\psi(y)\not\in\{\omega\}$ by the definition of $\psi^{-1}$. Since $\omega$ is an upper bound for the meet-semilattice $\langle X';\preceq_{X'}\rangle$, it follows that $\psi(y)\prec_{X'}\omega$ and hence there exists some $z\in X'$ such that $\psi(y)\not\perp_{X'}z$. By hypothesis, $\psi$ is a continuous weak $p$-morphism and hence by condition 2 of Definition 4.2, there exists $x\in X$ such that $x\not\perp_Xy$ and $z\preceq_{X'}\psi(x)$. Since $\omega\perp_X v$ for any point $v\in X$, it follows that $y\not=\omega$ and hence $y\not\in\{\omega\}$.

    For the right-to-left inclusion, assume $x\in\psi^{-1}[\{\omega\}]$. Then $\psi(x)\in\{\omega\}$ by the definition of $\psi^{-1}$ and thus $\psi(x)=\omega$. Hence $\psi(x)\perp_{X'}y$ for each $y\in X'$. Now assume for the sake of contradiction that $x\not\in\{\omega\}$. Then $x\not=\omega$ and so there exists $z\in X$ such that $x\not\perp_Xz$. Since $\psi$ is a continuous weak $p$-morphism, by condition 1 of Definition 4.2, it follows that $\psi(x)\not\perp_{X'}\psi(z)$ but this contradicts the fact that $\psi(x)\perp_{X'} y$ for each $y\in X'$ including when $y=\psi(z)$. Hence $x\in\{\omega\}$. 
   
     The calculation that $\psi^{-1}[U\cap V]=\psi^{-1}[U]\cap\psi^{-1}[V]$ is standard:
\begin{align*}
    \psi^{-1}[U\cap V]&=\{x\in X:\psi(x)\in U\cap V\}\tag{by the definition of $\psi^{-1}$}
    \\&=\{x\in X:\psi(x)\in U\wedge\psi(x)\in V\}\tag{by the definition of $\cap$}
    \\&=\{x\in X:\psi(x)\in U\}\cap\{x\in X:\psi(x)\in V\}\tag{by the definition of $\cap$}
    \\&=\psi^{-1}[U]\cap\psi^{-1}[V]\tag{by the definition of $\psi^{-1}$}
\end{align*}
 We now show that $\psi^{-1}[U^{\perp}]=\psi^{-1}[U]^{\perp}$. For the left-to-right inclusion, assume that $x\not\in\psi^{-1}[U]^{\perp}$ for the sake of contraposition. Then there exists $y\in\psi^{-1}[U]$ such that $x\not\perp_X y$. Since $\psi$ is a continuous weak $p$-morphism, by condition 1 of Definition \ref{p-morphism}, it follows that $\psi(x)\not\perp_{X'}\psi(y)$. Then since $\psi(y)\in U$, it follows that $\psi(x)\not\in U^{\perp}$ and hence $x\not\in\psi^{-1}[U^{\perp}]$.

For the right-to-left inclusion, assume that $x\not\in\psi^{-1}[U^{\perp}]$ for the sake of contraposition. Then $\psi(x)\not\in U^{\perp}$ so there exists $y\in U$ such that $y\not\perp_{X'}\psi(x)$. By condition 2 of Definition \ref{p-morphism}, there exists $z$ such that $z\not\perp_{X'}x$ and $y\preceq_{X'}\psi(z)$. Then $\psi(z)\in U$ so that $z\in\psi^{-1}[U]$. This together with $z\not\perp_{X'}x$ implies $x\not\in\psi^{-1}[U]^{\perp}$.

 It remains to verify that $\mathscr{A}_F$ is indeed a functor. Recall that Lemma 3.8 demonstrated that if $\mathcal{X}$ is an orthomodular space, then $\mathscr{A}_0(\mathcal{X})$ is an orthomodular lattice. This establishes that $\mathscr{A}_0$ is a functor on objects. We now check that $\mathscr{A}_1$ is a functor on morphisms. Hence, let $\mathcal{X}_1$, $\mathcal{X}_2$, and $\mathcal{X}_3$ be orthomodular spaces, let $\psi_1\colon\mathcal{X}_1\to\mathcal{X}_2$ and $\psi_2\colon\mathcal{X}_2\to\mathcal{X}_3$ be continuous weak $p$-morphisms, and define their composition map $\psi_2\circ\psi_1\colon\mathcal{X}_1\to\mathcal{X}_3$ in the usual way by $\psi_2\circ \psi_1(x):=\psi_2(\psi_1(x))$ for each point $x\in X_1$. The first part of this proof established that if $\psi\colon\mathcal{X}\to\mathcal{X}'$ is a continuous weak $p$-morphism between orthomodular spaces, then $\mathscr{A}_1(\psi)\colon\mathscr{A}_0(\mathcal{X}')\to\mathscr{A}_0(\mathcal{X})$ is a homomorphism between orthomodular lattices. This, together with the fact that homomorphisms are closed under composition, implies that $\mathscr{A}_1(\psi_2\circ\psi_1)\colon\mathscr{A}_0(\mathcal{X}_3)\to\mathscr{A}_0(\mathcal{X}_1)$ is a homomorphism whenever $\psi_2\circ\psi_1\colon\mathcal{X}_1\to\mathcal{X}_3$ is a continuous weak $p$-morphism. Moreover, we have $\mathscr{A}_1(\psi_2\circ\psi_1)=\mathscr{A}_1(\psi_1)\circ\mathscr{A}_1(\psi_2)$ since $(\psi_2\circ\psi_1)^{-1}=\psi_1^{-1}\circ\psi_2^{-1}$. Lastly, notice that for any orthomodular space $\mathcal{X}$, given its trivial identity map $\textit{id}_{\mathcal{X}}\colon\mathcal{X}\to\mathcal{X}$ defined by $\textit{id}_{\mathcal{X}}(x)=x$ for any $x\in X$, we have $\mathscr{A}_1(\textit{id}_{\mathcal{X}})=\textit{id}_{\mathscr{A}_0(\mathcal{X})}$ since $\textit{id}_{\mathcal{X}}^{-1}=\textit{id}_{\mathcal{X}}$. This shows that $\mathscr{A}_F:=\langle\mathscr{A}_1,\mathscr{A}_1\rangle\colon\mathbb{OMS}\to\mathbb{OML}$ is a contravariant functor.
\end{proof}
We now proceed to the main theorem of this paper. 
\begin{theorem}[Functorial duality between $\mathbb{OML}$ and $\mathbb{OMS}$]\label{duality}
The contravariant functors $\mathscr{A}_F\colon\mathbb{OMS}\to\mathbb{OML}$ and $\mathscr{S}_F\colon\mathbb{OML}\to\mathbb{OMS}$ constitute a dual equivalence of categories. 
\end{theorem}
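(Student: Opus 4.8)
The plan is to promote the object-level bijective correspondences of Theorems \ref{representation} and \ref{realization} to natural isomorphisms; by Lemmas \ref{homomorphism to continuous map} and \ref{continuous map to homomorphism} we already have the contravariant functors $\mathscr{S}_F\colon\mathbb{OML}\to\mathbb{OMS}$ and $\mathscr{A}_F\colon\mathbb{OMS}\to\mathbb{OML}$, so it remains to exhibit natural isomorphisms $\eta\colon\mathrm{id}_{\mathbb{OML}}\Rightarrow\mathscr{A}_F\circ\mathscr{S}_F$ and $\varepsilon\colon\mathrm{id}_{\mathbb{OMS}}\Rightarrow\mathscr{S}_F\circ\mathscr{A}_F$ (each composite being covariant, as the composition of two contravariant functors). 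The obvious candidates for the components are the maps $\eta_{\mathfrak{A}}:=h\colon\mathfrak{A}\to\mathscr{A}_0(\mathscr{S}_0(\mathfrak{A}))$ from Theorem \ref{representation} and $\varepsilon_{\mathcal{X}}:=f\colon\mathcal{X}\to\mathscr{S}_0(\mathscr{A}_0(\mathcal{X}))$ from Theorem \ref{realization}; these are already known to be, respectively, an orthomodular lattice isomorphism and a relational homeomorphism, hence isomorphisms in $\mathbb{OML}$ and $\mathbb{OMS}$.

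First I would check naturality of $\eta$. For an orthomodular lattice homomorphism $\phi\colon\mathfrak{A}\to\mathfrak{A}'$ one has $\mathscr{A}_1(\mathscr{S}_1(\phi))=(\phi^{-1})^{-1}$, and the naturality square commutes once we verify $h_{\mathfrak{A}'}\bigl(\phi(a)\bigr)=(\phi^{-1})^{-1}[h_{\mathfrak{A}}(a)]$ for every $a\in A$; this is the immediate computation
\[
(\phi^{-1})^{-1}[h_{\mathfrak{A}}(a)]=\{x\in\mathfrak{F}(\mathfrak{A}'):a\in\phi^{-1}[x]\}=\{x\in\mathfrak{F}(\mathfrak{A}'):\phi(a)\in x\}=h_{\mathfrak{A}'}\bigl(\phi(a)\bigr),
\]
using only the definitions of $h$ and of preimage. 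Dually, for naturality of $\varepsilon$, given a continuous weak $p$-morphism $\psi\colon\mathcal{X}\to\mathcal{X}'$ one has $\mathscr{S}_1(\mathscr{A}_1(\psi))=(\psi^{-1})^{-1}$, and commutativity of the square reduces to $f_{\mathcal{X}'}\bigl(\psi(x)\bigr)=(\psi^{-1})^{-1}[f_{\mathcal{X}}(x)]$ for each $x\in X$, which follows from
\[
(\psi^{-1})^{-1}[f_{\mathcal{X}}(x)]=\{U\in\mathcal{CO}(\mathcal{X}')^{\dagger}:x\in\psi^{-1}[U]\}=\{U\in\mathcal{CO}(\mathcal{X}')^{\dagger}:\psi(x)\in U\}=f_{\mathcal{X}'}\bigl(\psi(x)\bigr).
\]

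Since every component of $\eta$ and of $\varepsilon$ is an isomorphism (Theorems \ref{representation} and \ref{realization}) and both families of naturality squares commute, $\eta$ and $\varepsilon$ are natural isomorphisms, whence $\mathscr{S}_F$ and $\mathscr{A}_F$ constitute a dual equivalence $\mathbb{OML}\simeq\mathbb{OMS}^{\mathrm{op}}$. I do not expect a genuine obstacle here, since all analytic content — compactness via Alexander's Subbase Theorem, the Priestley-style separation conditions, $\perp$-stability, and the orthomodularity axiom — was already discharged in the proofs of Lemmas \ref{lattice to space} and \ref{space to lattice} and Theorems \ref{representation} and \ref{realization}. The one point demanding care is the variance bookkeeping: confirming that the composite functors land on the correct objects, that $\mathscr{A}_1(\mathscr{S}_1(\phi))$ and $\mathscr{S}_1(\mathscr{A}_1(\psi))$ really are the double preimages $(\phi^{-1})^{-1}$ and $(\psi^{-1})^{-1}$, and that $h$ and $f$ are read as morphisms into the respective biduals rather than as bare set maps.
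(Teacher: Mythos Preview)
Your proposal is correct and follows essentially the same approach as the paper: both arguments reduce the duality to verifying the two naturality identities $h_{\mathfrak{A}'}(\phi(a))=(\phi^{-1})^{-1}[h_{\mathfrak{A}}(a)]$ and $f_{\mathcal{X}'}(\psi(x))=(\psi^{-1})^{-1}[f_{\mathcal{X}}(x)]$, invoking Theorems \ref{representation} and \ref{realization} for the component isomorphisms and Lemmas \ref{homomorphism to continuous map} and \ref{continuous map to homomorphism} for functoriality. The paper even notes, as you do, that the first identity was already established in the proof of Lemma \ref{homomorphism to continuous map}; your write-up is slightly more explicit in its categorical framing (naming $\eta$ and $\varepsilon$ as natural isomorphisms), but the content is the same.
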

\begin{proof}
Let $\mathfrak{A}$ and $\mathfrak{A}'$ be orthomodular lattices, let $\phi\colon \mathfrak{A}\to \mathfrak{A}'$ be an orthomodular lattice homomorphism, and let $h\colon \mathfrak{A}\to\wp(\mathfrak{F}(\mathfrak{A}))$ be the canonical representation map defined in Definition 3.4. In Lemma 4.3, it was already shown that: 
\[h(\phi(a))=(\phi^{-1})^{-1}[h(a)]\hspace{.1cm}\text{i.e.,}\hspace{.1cm}h(\phi(a))=\mathscr{A}_1(\mathscr{S}_1(\phi))[h(a)]\]

Now let $\mathcal{X}$ and $\mathcal{X}'$ be orthomodular spaces, let $\psi\colon \mathcal{X}\to \mathcal{X}'$ be a continuous weak $p$-morphism and let $f\colon \mathcal{X}\to\mathfrak{F}(\mathcal{CO}(\mathcal{X})^{\dagger})$ be the algebraic realization map defined in the statement of Theorem \ref{realization}. We want to check that for each point $x\in X$, 
\[f(\psi(x))=(\psi^{-1})^{-1}[f(x)]\hspace{.1cm}\text{i.e.,}\hspace{.1cm}f(\psi(x))=\mathscr{S}_1(\mathscr{A}_1(\psi))[f(x)].\]
The result falls out of the definitions of $f$ and $\psi^{-1}$:
\begin{align*}
    U\in f(\psi(x))&\Leftrightarrow \psi(x)\in U\tag{by the definition of $f$}
    \\&\Leftrightarrow x\in\psi^{-1}[U]\tag{by the definition of $\psi^{-1}$}
    \\&\Leftrightarrow \psi^{-1}[U]\in f(x)\tag{by the definition of $f$}
    \\&\Leftrightarrow U\in(\psi^{-1})^{-1}[f(x)]\tag{by the definition of $\psi^{-1}$}
\end{align*}
 This, together with the representation results achieved in section 3 along with Lemmas \ref{homomorphism to continuous map} and \ref{continuous map to homomorphism}, implies that the following diagrams commute:

\begin{tikzcd}
\mathfrak{A} \arrow[dd, "\phi"'] \arrow[r] \arrow[r] \arrow[rr, bend left, "h"] & \mathscr{S}_0(\mathfrak{A}) \arrow[r]                                                & \mathscr{A}_0(\mathscr{S}_0(\mathfrak{A})) \arrow[dd, "\mathscr{A}_1(\mathscr{S}_1(\phi))"] \\
{} \arrow[r, "1-1", no head, dashed]                                       & {} \arrow[r, "1-1", no head, dashed]                                                 & {}                                                                                          \\
\mathfrak{A}' \arrow[r] \arrow[rr, bend right, "h"']                             & \mathscr{S}_0(\mathfrak{A}') \arrow[uu, "\mathscr{S}_1(\phi)" description] \arrow[r] & \mathscr{A}_0(\mathscr{S}_0(\mathfrak{A}'))                                                
\end{tikzcd}
\hskip 4em
\begin{tikzcd}
\mathcal{X} \arrow[dd, "\psi"'] \arrow[r] \arrow[rr, bend left, "f"] & \mathscr{A}_0(\mathcal{X}) \arrow[r]                                                & \mathscr{S}_0(\mathscr{A}_0(\mathcal{X})) \arrow[dd, "\mathscr{S}_1(\mathscr{A}_1(\psi))"] \\
{} \arrow[r, "1-1", no head, dashed]                            & {} \arrow[r, "1-1", no head, dashed]                                                & {}                                                                                         \\
\mathcal{X}' \arrow[r] \arrow[rr, bend right, "f"']                   & \mathscr{A}_0(\mathcal{X}') \arrow[uu, "\mathscr{A}_1(\psi)" description] \arrow[r] & \mathscr{S}_0(\mathscr{A}_0(\mathcal{X}'))                                                
\end{tikzcd}

\noindent where $\mathscr{A}_1(\mathscr{S}_1(\phi))=(\phi^{-1})^{-1}$ and $\mathscr{S}_1(\mathscr{A}_1(\psi))=(\psi^{-1})^{-1}$ since $\mathscr{S}_1(\phi)=\phi^{-1}$ and $\mathscr{A}_1(\psi)=\psi^{-1}$. The dotted lines in the above commutative diagrams labelled \say{$1-1$} simply indicate the $1-1$ (dual) correspondence that has been established between the respective morphisms. This completes the proof that the contravariant functors $\mathscr{A}_F\colon\mathbb{OMS}\to\mathbb{OML}$ and $\mathscr{S}_F\colon\mathbb{OML}\to\mathbb{OMS}$ constitute a dual equivalence of categories.  
\end{proof}
\section{Algebraic and topological semantics for quantum logic}

We conclude by studying an application of our duality within the setting of the quantum logic $\mathcal{Q}$ as described in \cite{hartonas}. For an alternative formulation of $\mathcal{Q}$, refer to \cite{goldblatt2}. It is well-known that orthomodular lattices provide an algebraic semantics for various formulations quantum logic (such as $\mathcal{Q}$) in an analogous way in which Boolean algebras provide an algebraic semantics for classical logic. Therefore, as an application of our duality for orthomodular lattices, we develop a topological semantics for quantum logic using orthomodular spaces. For other papers which develop a topological semantics for logical calculi by directly exploiting a duality theorem for the Lindenbaum algebra of the logic, we refer the reader to for instance Bezhanishvili, Grilletti, and Holiday in \cite{bez} as well as Massas in \cite{mas}.

\subsection{The quantum logic $\mathcal{Q}$}
\begin{definition}
Let $\mathcal{L}_{\mathcal{Q}}$ be a language consisting of a countable set of propositional variables $\textit{Var}=\{p_1,p_2,\dots\}$ and a collection of $\mathcal{L}_{\mathcal{Q}}$-formulas generated by the following grammar: 
\[\alpha:=F\mid p_i\mid\neg\alpha\mid\alpha\wedge\alpha\]
where $p_i$ rewrites to any element of $\textit{Var}$. 
\end{definition}
We present the quantum logic $\mathcal{Q}$ as a binary logic in the sense of \cite{goldblatt2}. 
\vspace{2cm}
\begin{definition} 
The quantum logic $\mathcal{Q}$ consists of the language $\mathcal{L}_{\mathcal{Q}}$ (as described in Definition 5.1) and a provability relation $\vdash_{\mathcal{Q}}\subseteq\mathcal{L}_{\mathcal{Q}}\times\mathcal{L}_{\mathcal{Q}}$ satisfying the following axioms: 
\begin{multicols}{2}
    \begin{enumerate}
    \item $\alpha\vdash_{\mathcal{Q}}\alpha$
    \item $F\vdash_{\mathcal{Q}}\beta$
    \item $\alpha\wedge\beta\vdash_{\mathcal{Q}}\alpha$
    \item $\alpha\wedge\beta\vdash_{\mathcal{Q}}\beta$
    \item $\alpha\vdash_{\mathcal{Q}}\neg\neg\alpha$
    \item $\neg\neg\alpha\vdash_{\mathcal{Q}}\alpha$
    
    \end{enumerate}
\end{multicols}

The collection of theorems in $\mathcal{Q}$ includes all instances of axioms 1--6 and is closed under the application of the following rules of inference:  

\begin{multicols}{2}
            \begin{enumerate}
    \item[7.] $\alpha\vdash_{\mathcal{Q}}\beta/\neg\beta\vdash_{\mathcal{Q}}\neg\alpha$
    \item[8.] $\alpha\vdash_{\mathcal{Q}}\beta$,\hspace{.1cm} $\beta\vdash_{\mathcal{Q}}\gamma/\alpha\vdash_{\mathcal{Q}}\gamma$
    \item[9.] $\alpha\vdash_{\mathcal{Q}}\beta$,\hspace{.1cm} $\alpha\vdash_{\mathcal{Q}}\gamma/\alpha\vdash_{\mathcal{Q}}\beta\wedge\gamma$
    \item[10.] $\alpha\vdash_{\mathcal{Q}}\beta$,\hspace{.1cm} $\neg\alpha\wedge\beta\vdash_{\mathcal{Q}} F/\beta\vdash_{\mathcal{Q}}\alpha$
\end{enumerate}
\end{multicols}

\end{definition}

\subsection{Algebraic semantics for $\mathcal{Q}$}
In this subsection, we outline the algebraic semantics of $\mathcal{Q}$. 
\begin{definition}
Let $\mathfrak{A}=\langle A;\cdot,-,0\rangle$ be an orthomodular lattice and let $v_{\mathfrak{A}}\colon\textit{Var}\to\mathfrak{A}$ be an atomic valuation map. We extend $v_{\mathfrak{A}}$ to a map $\widehat{v_{\mathfrak{A}}}\colon\mathcal{L}_{\mathcal{Q}}\to\mathfrak{A}$ according to the following recursive definition: 
\begin{multicols}{2}
\begin{enumerate}
     \item $\widehat{v_{\mathfrak{A}}}(F)=0$
    \item $\widehat{v_{\mathfrak{A}}}(p)=v_{\mathfrak{A}}(p)$
    \item $\widehat{v_{\mathfrak{A}}}(\neg\alpha)=-\widehat{v_{\mathfrak{A}}}(\alpha)$
    \item $\widehat{v_{\mathfrak{A}}}(\alpha\wedge\beta)=\widehat{v_{\mathfrak{A}}}(\alpha)\cdot\widehat{v_{\mathfrak{A}}}(\beta)$ 
\end{enumerate}
\end{multicols}

\end{definition}
Let $\mathbb{OML}_0$ denote the class of all orthomodular lattices. 
\begin{definition}
Given formulas $\alpha,\beta\in\mathcal{L}_{\mathcal{Q}}$, define a semantic consequence relation $\models_{\mathbb{OML}_0}\subseteq\mathcal{L}_{\mathcal{Q}}\times\mathcal{L}_{\mathcal{Q}}$ by: \[\alpha\models_{\mathbb{OML}_0}\beta\hspace{.3cm}\Leftrightarrow\hspace{.3cm} \widehat{v_{\mathfrak{A}}}(\alpha)\leq\widehat{v_{\mathfrak{A}}}(\beta)\] for each orthomodular lattice $\mathfrak{A}\in\mathbb{OML}_0$ and every valuation $\widehat{v_{\mathfrak{A}}}\colon\mathcal{L}_{\mathcal{Q}}\to\mathfrak{A}$. 
\end{definition}

\begin{theorem}[Algebraic soundness and completeness of $\mathcal{Q}$]
The quantum logic $\mathcal{Q}$ is sound and complete with respect to $\mathbb{OML}_0$ under the algebraic semantics given in Definitions 5.3 and 5.4, i.e., $\alpha\vdash_{\mathcal{Q}}\beta\Leftrightarrow\alpha\models_{\mathbb{OML}_0}\beta$.  
\end{theorem}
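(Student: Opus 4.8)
The statement is the classical algebraic soundness and completeness of (minimal orthomodular) quantum logic, and the natural route is a Lindenbaum--Tarski argument; the result is essentially the one surveyed in \cite{hartonas} and \cite{goldblatt2}, so the plan is to reconstruct it in the present notation rather than to do anything new. I would split the proof into the two implications of the biconditional $\alpha\vdash_{\mathcal{Q}}\beta\Leftrightarrow\alpha\models_{\mathbb{OML}_0}\beta$.

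For \textbf{soundness}, I would argue by induction on the length of a derivation witnessing $\alpha\vdash_{\mathcal{Q}}\beta$. It suffices to check that, for every orthomodular lattice $\mathfrak{A}$ and every valuation $\widehat{v_{\mathfrak{A}}}$ from Definition 5.3, each of the axioms 1--6 of Definition 5.2 yields $\widehat{v_{\mathfrak{A}}}(\alpha)\leq\widehat{v_{\mathfrak{A}}}(\beta)$ and that the rules 7--10 preserve this property. Axioms 1--4 are immediate from the fact that $\langle A;\cdot,0\rangle$ is a meet-semilattice with least element $0$ (Definition 2.5) together with the recursive clauses for $\wedge$ and $F$ in Definition 5.3; axioms 5--6 are the clause $a=--a$ of the orthocomplementation; rule 7 is the order-reversal clause $a\leq b\Rightarrow -b\leq -a$; rule 8 is transitivity of $\leq$; rule 9 is the characterization of $\cdot$ as infimum; and rule 10 is \emph{precisely} condition 2 of Proposition \ref{oml}: reading $-\widehat{v_{\mathfrak{A}}}(\alpha)\cdot\widehat{v_{\mathfrak{A}}}(\beta)=0$ off the premise $\neg\alpha\wedge\beta\vdash_{\mathcal{Q}}F$ and using $\widehat{v_{\mathfrak{A}}}(\alpha)\leq\widehat{v_{\mathfrak{A}}}(\beta)$, orthomodularity forces $\widehat{v_{\mathfrak{A}}}(\alpha)=\widehat{v_{\mathfrak{A}}}(\beta)$, whence $\widehat{v_{\mathfrak{A}}}(\beta)\leq\widehat{v_{\mathfrak{A}}}(\alpha)$. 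Thus $\alpha\vdash_{\mathcal{Q}}\beta$ implies $\alpha\models_{\mathbb{OML}_0}\beta$.

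For \textbf{completeness}, I would build the Lindenbaum--Tarski algebra. Set $\alpha\equiv\beta$ iff $\alpha\vdash_{\mathcal{Q}}\beta$ and $\beta\vdash_{\mathcal{Q}}\alpha$; axiom 1 and rule 8 make $\equiv$ an equivalence relation, and it is a congruence for $\neg$ and $\wedge$ (rule 7 for $\neg$; axioms 3--4 with rule 9 for $\wedge$). Let $\mathfrak{L}_{\mathcal{Q}}=\langle\mathcal{L}_{\mathcal{Q}}/{\equiv};\cdot,-,0\rangle$ with $[\alpha]\cdot[\beta]:=[\alpha\wedge\beta]$, $-[\alpha]:=[\neg\alpha]$, $0:=[F]$, ordered by $[\alpha]\leq[\beta]$ iff $\alpha\vdash_{\mathcal{Q}}\beta$. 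Using the axioms and rules of Definition 5.2 --- in particular that $\alpha\wedge\neg\alpha$ is provably equivalent to $F$, and, for orthomodularity, rule 10 via Proposition \ref{oml} --- one verifies that $\mathfrak{L}_{\mathcal{Q}}$ meets every condition of Definitions 2.5 and 2.6, so $\mathfrak{L}_{\mathcal{Q}}\in\mathbb{OML}_0$. Taking the canonical valuation $v(p_i):=[p_i]$, a routine induction on formulas gives $\widehat{v}(\alpha)=[\alpha]$ for all $\alpha\in\mathcal{L}_{\mathcal{Q}}$. Hence if $\alpha\models_{\mathbb{OML}_0}\beta$, then in particular $\widehat{v}(\alpha)\leq\widehat{v}(\beta)$ in $\mathfrak{L}_{\mathcal{Q}}$, i.e. $[\alpha]\leq[\beta]$, which by definition of the order is $\alpha\vdash_{\mathcal{Q}}\beta$.

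The inductive verifications for soundness and the congruence check are entirely routine; the step I expect to require the most care is confirming that $\mathfrak{L}_{\mathcal{Q}}$ is genuinely an \emph{orthomodular} lattice --- concretely, that $[\alpha]\cdot[-\alpha]=0$ for every $\alpha$ and that the orthomodularity quasi-inequation of Definition 2.6 holds --- since this is exactly the point at which the contradiction principle and rule 10 (through Proposition \ref{oml}) are indispensable.
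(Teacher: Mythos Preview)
Your proposal is correct and follows essentially the same route as the paper: a routine soundness check by induction on derivations, and completeness via the Lindenbaum--Tarski algebra $\mathfrak{L}_{\mathcal{Q}}$ with the canonical valuation $v(p_i):=[p_i]$. The paper is in fact more terse than you are---it dispatches soundness in one line and lists the four ortholattice/orthomodularity conditions for $\mathfrak{L}_{\mathcal{Q}}$ without further argument---so your more explicit account of which axiom or rule discharges each verification (in particular the use of Proposition~\ref{oml} for rule~10) only sharpens what the paper sketches.
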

\begin{proof}
We only sketch the proof as the result is well-known. For more details on the algebraic foundations of quantum logics and related calculi, consult \cite{birkhoff,fei}.

For soundness, it is easy to see that $\alpha\vdash_{\mathcal{Q}}\beta$ implies $\alpha\models_{\mathbb{OML}_0}\beta$ for any $\alpha,\beta\in\mathcal{L}_{\mathcal{Q}}$. For completeness, we construct the Lindenbaum algebra $\mathfrak{A}_{\mathcal{Q}}$ of $\mathcal{Q}$ as follows: The carrier set of $\mathfrak{A}_{\mathcal{Q}}$ is the quotient space $\mathcal{L}_{{\mathcal{Q}}_{/\equiv_{\mathcal{Q}}}}$ where $\equiv_{\mathcal{Q}}\subseteq\mathcal{L}_{\mathcal{Q}}\times\mathcal{L}_{\mathcal{Q}}$ is a congruence relation defined by
\[\alpha\equiv_{\mathcal{Q}}\beta\hspace{.3cm}\Leftrightarrow\hspace{.3cm}\alpha\vdash_{\mathcal{Q}}\beta\hspace{.2cm}\&\hspace{.2cm}\beta\vdash_{\mathcal{Q}}\alpha\]

 Now let $[\alpha]$ denote the equivalence class of $\alpha$ under $\equiv_{\mathcal{Q}}$ and define the following type-lifted operators $0:=[F]$, $-[\alpha]:=[\neg\alpha]$, and $[\alpha]\cdot[\beta]:=[\alpha\wedge\beta]$. Moreover, by defining the lattice order of $\mathfrak{A}_{\mathcal{Q}}$ by:
\[[\alpha]\leq[\beta]\hspace{.3cm}\Leftrightarrow\hspace{.3cm}\alpha\vdash_{\mathcal{Q}}\beta\] 
\vspace{2cm}
\noindent then it is easy to see that the Lindenbaum algebra $\mathfrak{A}_{\mathcal{Q}}$ satisfies:
\begin{multicols}{2}
\begin{enumerate}
    \item $[\alpha]\cdot-[\alpha]=0$
    \item $[\alpha]=--[\alpha]$
    \item $[\alpha]\leq[\beta]\hspace{.1cm}\Rightarrow\hspace{.1cm}-[\beta]\leq-[\alpha]$
    \item $[\alpha]\leq[\beta]\hspace{.1cm}\&\hspace{.1cm}-[\alpha]\cdot[\beta]=0\hspace{.1cm}\Rightarrow\hspace{.1cm}[\alpha]=[\beta]$
\end{enumerate}
 
\end{multicols}

Hence the Lindenbaum algebra $\mathfrak{A}_{\mathcal{Q}}$ of $\mathcal{Q}$ is an orthomodular lattice. By setting $v_{\mathfrak{A}}(p_i):=[p_i]$ for $p_i\in\textit{Var}$, an easy induction on the complexity of $\mathcal{L}_{\mathcal{Q}}$-formulas shows that $\widehat{v_{\mathfrak{A}}}(\alpha)=[\alpha]$ for any $\mathcal{L}_{\mathcal{Q}}$-formula $\alpha$. Therefore if we assume $\alpha\not\vdash_{\mathcal{Q}}\beta$, then $[\alpha]\not\leq[\beta]$. This implies that $\widehat{v_{\mathfrak{A}}}(\alpha)\not\leq\widehat{v_{\mathfrak{A}}}(\beta)$ and hence $\alpha\not\models_{\mathbb{OMS}_0}\beta$. This establishes that $\alpha\models_{\mathbb{OMS}_0}\beta$ implies $\alpha\vdash_{\mathcal{Q}}\beta$, as desired.        
\end{proof}

\subsection{Topological semantics for $\mathcal{Q}$}
In light of our duality and the algebraic semantics for $\mathcal{Q}$ described in the Section 5.2, we now develop a topological semantics for $\mathcal{Q}$ and prove soundness and completeness results. 
\begin{definition}
Let $\mathcal{X}=\langle X;\preceq,\perp,\Omega,\mathcal{T}_X\rangle$ be an orthomodular space and let $v_{\mathcal{X}}\colon\textit{Var}\to\mathcal{CO}(\mathcal{X})^{\dagger}$ be an atomic valuation map. We extend the map $v_{\mathcal{X}}$ to a map $\widehat{v_{\mathcal{X}}}\colon\mathcal{L}_{\mathcal{Q}}\to\mathcal{CO}(\mathcal{X})^{\dagger}$ recursively as follows: 

\begin{multicols}{2}
 \begin{enumerate}
     \item $\widehat{v_{\mathcal{X}}}(F)=\{\omega\}$
        \item $\widehat{v_{\mathcal{X}}}(p)=v_{\mathcal{X}}(p)$
        \item $\widehat{v_{\mathcal{X}}}(\neg\alpha)=\widehat{v_{\mathcal{X}}}(\alpha)^{\perp}$
        \item $\widehat{v_{\mathcal{X}}}(\alpha\wedge \beta)=\widehat{v_{\mathcal{X}}}(\alpha)\cap\widehat{v_{\mathcal{X}}}(\beta)$
    \end{enumerate}
\end{multicols}

\end{definition}
Let $\mathbb{OMS}_0$ denote the class of all orthomodular spaces. The notion of topological semantic consequence is defined analogously to the algebraic semantic consequence relation given in Definition 5.4. 
\begin{definition}
Given $\mathcal{L}_{\mathcal{Q}}$-formulas $\alpha$ and $\beta$, define a semntic consequence relation $\models_{\mathbb{OMS}_0}\subseteq\mathcal{L}_{\mathcal{Q}}\times\mathcal{L}_{\mathcal{Q}}$ by: 
\[\alpha\models_{\mathbb{OMS}_0}\beta\hspace{.3cm}\Leftrightarrow\hspace{.3cm}\widehat{v_{\mathcal{X}}}(\alpha)\subseteq\widehat{v_{\mathcal{X}}}(\beta)\] for every orthomodular space $\mathcal{X}\in\mathbb{OMS}_0$ and each valuation $\widehat{v_{\mathcal{X}}}\colon\mathcal{L}_{\mathcal{Q}}\to\mathcal{CO}(\mathcal{X})^{\dagger}$. 
\end{definition}

\begin{theorem}[Topological soundness and completeness for $\mathcal{Q}$] $\mathcal{Q}$ is sound and complete with respect to $\mathbb{OMS}_0$ under the topological semantics given in Definitions 5.6 and 5.7, i.e., $\alpha\vdash_{\mathcal{Q}}\beta\Leftrightarrow\alpha\models_{\mathbb{OMS}_0}\beta$. 
\end{theorem}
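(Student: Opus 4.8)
The plan is to transfer the algebraic soundness and completeness theorem (Theorem 5.5) across the duality established in Theorem \ref{duality}. The key observation is that the two semantic consequence relations $\models_{\mathbb{OML}_0}$ and $\models_{\mathbb{OMS}_0}$ are in fact identical, because every orthomodular lattice is (isomorphic to) the dual lattice $\mathscr{A}_0(\mathcal{X})$ of some orthomodular space $\mathcal{X}$, and conversely, for every orthomodular space $\mathcal{X}$, the lattice $\mathscr{A}_0(\mathcal{X})=\langle\mathcal{CO}(\mathcal{X})^{\dagger};\cap,{}^{\perp},\{\omega\}\rangle$ is an orthomodular lattice by Lemma \ref{space to lattice}. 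Since algebraic soundness and completeness (Theorem 5.5) is already available, it suffices to show $\alpha\models_{\mathbb{OMS}_0}\beta \Leftrightarrow \alpha\models_{\mathbb{OML}_0}\beta$.

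First I would prove the direction $\alpha\models_{\mathbb{OML}_0}\beta \Rightarrow \alpha\models_{\mathbb{OMS}_0}\beta$. Fix an orthomodular space $\mathcal{X}$ and a valuation $v_{\mathcal{X}}\colon\textit{Var}\to\mathcal{CO}(\mathcal{X})^{\dagger}$. By Lemma \ref{space to lattice}, $\mathscr{A}_0(\mathcal{X})$ is an orthomodular lattice, and $v_{\mathcal{X}}$ is literally an atomic valuation into this lattice. A routine induction on the complexity of $\mathcal{L}_{\mathcal{Q}}$-formulas — comparing clauses 1--4 of Definition 5.6 with clauses 1--4 of Definition 5.3, using that the operations of $\mathscr{A}_0(\mathcal{X})$ are exactly $\cap$, ${}^{\perp}$, and $\{\omega\}$ — shows $\widehat{v_{\mathcal{X}}}(\gamma) = \widehat{v_{\mathscr{A}_0(\mathcal{X})}}(\gamma)$ for every $\gamma$, where on the right we regard $v_{\mathcal{X}}$ as a valuation into $\mathscr{A}_0(\mathcal{X})$. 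Since the lattice order of $\mathscr{A}_0(\mathcal{X})$ is subset inclusion, $\alpha\models_{\mathbb{OML}_0}\beta$ applied to $\mathscr{A}_0(\mathcal{X})$ gives $\widehat{v_{\mathcal{X}}}(\alpha)\subseteq\widehat{v_{\mathcal{X}}}(\beta)$, which is precisely what is needed. As $\mathcal{X}$ and $v_{\mathcal{X}}$ were arbitrary, $\alpha\models_{\mathbb{OMS}_0}\beta$.

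For the converse direction $\alpha\models_{\mathbb{OMS}_0}\beta \Rightarrow \alpha\models_{\mathbb{OML}_0}\beta$, I would use the topological representation theorem (Theorem \ref{representation}). Fix an orthomodular lattice $\mathfrak{A}$ and a valuation $v_{\mathfrak{A}}\colon\textit{Var}\to\mathfrak{A}$. The isomorphism $h\colon\mathfrak{A}\to\mathscr{A}_0(\mathscr{S}_0(\mathfrak{A}))$ transports $v_{\mathfrak{A}}$ to the valuation $h\circ v_{\mathfrak{A}}\colon\textit{Var}\to\mathcal{CO}(\mathscr{S}_0(\mathfrak{A}))^{\dagger}$ on the orthomodular space $\mathscr{S}_0(\mathfrak{A})$. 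Because $h$ is an orthomodular lattice isomorphism, a short induction gives $\widehat{h\circ v_{\mathfrak{A}}}(\gamma) = h(\widehat{v_{\mathfrak{A}}}(\gamma))$ for all $\gamma$, and $h$ being an order isomorphism means $\widehat{v_{\mathfrak{A}}}(\alpha)\leq\widehat{v_{\mathfrak{A}}}(\beta)$ iff $h(\widehat{v_{\mathfrak{A}}}(\alpha))\subseteq h(\widehat{v_{\mathfrak{A}}}(\beta))$. Applying the hypothesis $\alpha\models_{\mathbb{OMS}_0}\beta$ to $\mathscr{S}_0(\mathfrak{A})$ with this valuation yields the latter, hence the former. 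Thus the two consequence relations coincide, and combining this with Theorem 5.5 gives $\alpha\vdash_{\mathcal{Q}}\beta \Leftrightarrow \alpha\models_{\mathbb{OMS}_0}\beta$.

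The proof involves no genuine obstacle; the only mildly delicate point is bookkeeping — keeping straight that a single set-map serves simultaneously as a topological valuation and as an algebraic valuation into the associated dual lattice, and that the recursion clauses in Definitions 5.3 and 5.6 match up under the identification of the orthomodular operations of $\mathscr{A}_0(\mathcal{X})$ with $\cap$, ${}^{\perp}$, $\{\omega\}$. One could alternatively route completeness directly through the Lindenbaum algebra $\mathfrak{A}_{\mathcal{Q}}$ of Theorem 5.5, passing to its dual space $\mathscr{S}_0(\mathfrak{A}_{\mathcal{Q}})$ and the valuation $p_i\mapsto h([p_i])$, which exhibits an explicit orthomodular space refuting any non-theorem; I would mention this as the concrete witness underlying the abstract argument.
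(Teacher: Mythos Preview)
Your proposal is correct and follows essentially the same approach as the paper: the paper's proof is a terse two-sentence sketch invoking exactly the two facts you use---that $\mathscr{A}_0(\mathcal{X})$ is an orthomodular lattice for every orthomodular space $\mathcal{X}$, and that every orthomodular lattice is isomorphic to one of this form---together with Theorem 5.5. You have simply unpacked the bookkeeping (the induction matching Definitions 5.3 and 5.6, and the transport along $h$) that the paper leaves implicit.
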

\begin{proof}
By our duality, $\mathscr{A}_0(\mathcal{X})=\langle\mathcal{CO}(\mathcal{X})^{\dagger};\cap,\hspace{.05cm}^{\perp},\{\omega\}\rangle$ is an orthomodular lattice whenever $\mathcal{X}$ is an orthomodular space. Moreover, every orthomodular lattice $\mathfrak{A}$ is isomorphic to one of the form $\mathscr{A}_0(\mathcal{X})$ where $\mathcal{X}$ is the orthomodular space dual to $\mathfrak{A}$. This, together with Theorem 5.5, Definition 5.6, and Definition 5.7 implies that for all $\mathcal{L}_{\mathcal{Q}}$-formulas $\alpha$ and $\beta$, we have $\alpha\vdash_{\mathcal{Q}}\beta$ if and only if $\alpha\models_{\mathbb{OMS}_0}\beta$. 
\end{proof}

\section{Conclusions and future work}
The aim of this paper was to explore the possibility of adding a topology (along the lines of Bimb\'o \cite{bimbo}) to the orthomodular frames introduced by Hartonas in \cite{hartonas}.  We have shown that the order relation that \cite{hartonas} uses to deal with orthomodularity does not conflict with the order relation that \cite{bimbo} uses to extend Goldblatt's topological duality theorem in \cite{goldblatt1} from objects to morphisms.

Aside from connections to the duality in \cite{bimbo} that was mentioned at the outset of this work, our duality is also related to Priestley duality in \cite{priestley} for distributive lattices in the sense that both use compact totally order-disconnected spaces  
(see \cite[Definition 9.3.1]{bimbo}). Unlike in Priestley space where two points $x$ and $y$ such that $x\not\preceq y$ are separated by clopen upsets, in orthomodular space, they are separated by clopen $\perp$-stables. In addition, close connections can be made to the duality given by J\'onsson and Tarski in \cite{tar} as well as by Bimb\'o and Dunn \cite{bimbo1}, whereby $n$-ary operators on a lattice are represented by $n+1$-ary relations on the frame reduct of their dual spaces. This was seen, for instance, in the representation of the orthocomplementation operator by the relation $\perp_A\subseteq\mathfrak{F}(\mathfrak{A})\times\mathfrak{F}(\mathfrak{A})$ on the canonical frame underlying the dual space $\mathscr{S}_0(\mathfrak{A})$ of $\mathfrak{A}$.  

Future lines of research might include investigating the topological duality theory of orthomodular lattices with additional operations, such as the operations of Sasaki product and Sasaki hook. This would shed light on the topological semantics of quantum logic with implication.

\end{document}